\newcommand{\C}{\mathbb{C}}
\newcommand{\Z}{\mathbb{Z}}
\newcommand{\R}{\mathbb{R}}
\DeclareMathOperator{\WF}{\mathrm{WF}}
\DeclareMathOperator{\tr}{\mathrm{tr}}
\DeclareMathOperator{\Op}{\mathrm{Op}}
\let\Re\undefined
\DeclareMathOperator{\Re}{\mathrm{Re}}
\newcommand{\dd}{\mathrm{d}}
\newtheorem{theorem}{Theorem}
\newtheorem{proposition}[theorem]{Proposition}
\newtheorem{corollary}[theorem]{Corollary}
\newtheorem{lemma}[theorem]{Lemma}
\theoremstyle{definition}
\begin{document}

\begin{abstract}
We study the wavefront set of resolvent of arbitrary flows in their region of convergence, to obtain a general formula for their intersection with currents. We provide an application to the topology of surfaces. 
\end{abstract}

\title{Resolvent of vector fields and Lefschetz numbers}
\author{Yann Chaubet and Yannick Guedes Bonthonneau}
\maketitle

Given a chaotic flow $(\varphi_t)$ acting on a closed manifold $M$, the distribution of periodic points, given as the solutions $(t, z) \in \R_{>0} \times M$ of the equation
\begin{equation}\label{eq:periodicequation}
\varphi_t(z) = z
\end{equation}
have been extensively studied in the literature. To study the periodic orbits, Smale \cite{Smale-67} suggested to introduce dynamical series or zeta functions involving their travel time; those series have proved to be very effective and have given rise to many important works, starting from the seminal paper of Ruelle \cite{ruelle1976zeta}. The last decade has seen a series of breakthrough in this topic, in particular with \cite{giulietti2013anosov}, \cite{dyatlov2016dynamical}, \cite{Faure-Tsujii-17} and \cite{DyGu18}. 

The modern approach for studying the analytic properties of a dynamical series consists in rewriting it as a distributional pairing or trace involving the resolvent
\[
(s - \mathcal L_X)^{-1} = \int_0^\infty e^{-ts} \varphi_{-t}^* \dd t \ : \ \Omega^\bullet(M) \to \mathcal{D}'^\bullet(M)
\]
of the flow. Here $\Omega^\bullet(M)$ is the space of smooth differential forms on $M$ while $\mathcal{D}'^\bullet(M)$ is its dual topological space, $s$ is a complex parameter with large real part, $\varphi_t^*$ is the pull-back by the flow and $\mathcal L_X$ is the Lie derivative in the direction of the generator $X = \left.\frac{\dd}{\dd t} \varphi_t\right|_{t = 0}$ of $(\varphi_t)$. However, establishing such a relation requires some justification, and authors often deal with this crucial step in an ad-hoc fashion (see \S\ref{sec:results} below). For this reason, we propose in this paper to gather and generalize these statements by providing a formula which expresses dynamical series of a general form in terms of the resolvent of any flow (not necessarily chaotic), see Theorem \ref{thm:currents} below. Before stating our results in their most precise form in \S\ref{sec:results}, we first present an application about topology of surfaces.

It is somewhat natural to ask what happens if we replace \eqref{eq:periodicequation} by the equation
\begin{equation}\label{eq:Fperiodic}
\varphi_t(z) = F(z)
\end{equation}
where $F : M \to M$ is a smooth map. While this question can be asked in a very general context, we will focuse on geodesic flows of surfaces. To be more precise, let $\Sigma$ be a closed oriented Riemannian surface whose geodesic flow $(\varphi_t)_{t \in \R}$, acting on the unit tangent bundle $M = S\Sigma$, has the Anosov property. The geodesic vector field $X$ is the Reeb vector field of the Liouville contact one-form $\alpha \in \Omega^1(M)$. We denote by $\pi :S\Sigma\to \Sigma$ the natural projection and by $(\Phi_t)_{t\in\R}$ the Hamiltonian bundle lift of $(\varphi_t)_{t \in \R}$ to $T^\ast S\Sigma$.

Next, let $f:\Sigma\to\Sigma$ be a diffeomorphism and $F:S\Sigma\to S\Sigma$ be a bundle map over $f$, in the sense that $\pi \circ F = f \circ \pi$. We will say that $F$ is \textit{transverse} to $(\varphi_t)$ if
\begin{equation}\label{eq:transverse}
N^*\mathrm G_F \cap \Gamma(\varphi)' = \{0\}.
\end{equation}
Here $N^* \mathrm G_F \subset T^*(M \times M)$ denotes the conormal bundle of the graph $\mathrm G_F = \{(F(z), z))~|~z \in M\}$ of $F$, while $\Gamma(\varphi) = \Delta(T^*M) \cup \overline{\Omega_+}$, where $\Delta(T^*M)$ is the diagonal in $T^*(M \times M)$ and 
\[
\Omega_+ = \bigl\{(\Phi_t(z,\xi),(z,\xi))\ |\ (z,\xi) \in T^*M,~\langle \xi, X(z)\rangle = 0,~t \geqslant 0\bigr\}.
\]
Finally, we denoted $A' = \{ ((y,\eta),(x,-\xi))\ |\ ((y,\eta),(x,\xi))\in A\}$ for each $A\subset T^\ast(M\times M)$. 

Under the condition \eqref{eq:transverse}, the set
\[
Z_F = \bigl\{(t, z) \in \R_+ \times M \ |\ \varphi_t(z) = F(z)\bigr\},
\]
which is the set of points satisfying \eqref{eq:Fperiodic}, is an oriented $1$-dimensional submanifold of $\R_+ \times M$.

\begin{theorem}\label{thm:lefschetz}
Let $f$ and $F$ be as above. For $\Re s  \gg 1$ the integral
\begin{equation}\label{eq:eta(s)}
\eta(s) = \int_{Z_F} e^{- t s}  \pi_M^* \alpha
\end{equation}
converges, where $\pi_M : \R_+ \times M \to M$ is the projection over the second factor. Moreover $s \mapsto \eta(s)$ has a meromorphic continuation to $\C$ and
\begin{equation}\label{eq:eta(s)residue}
\lim_{s\to 0} s\eta(s) = \Lambda_f + \int_{S\Sigma} (\iota_X F_\ast \alpha   - 1) \dd \mu,
\end{equation}
where $\Lambda_f$ is the Lefschetz number of $f$. Here $\mu$ is the normalized measure on $M$ induced by $\alpha \wedge \dd \alpha$, and $\iota_X$ is the interior product with $X$.
\end{theorem}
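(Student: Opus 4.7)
The plan is to apply Theorem~\ref{thm:currents} to recognize $\eta(s)$ as a distributional pairing involving the resolvent $R(s)=(s-\mathcal L_X)^{-1}$, and then to read off the residue at $s=0$ from the spectral structure of the contact Anosov geodesic flow on $S\Sigma$.

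First, one places the problem inside the framework of Theorem~\ref{thm:currents}. Observe that $Z_F$ is the preimage of the diagonal $\Delta_M\subset M\times M$ under the map $\psi\colon\R_+\times M\to M\times M$, $(t,z)\mapsto (\varphi_t(z), F(z))$. The transversality hypothesis~\eqref{eq:transverse} is precisely the wavefront compatibility which makes $\psi^{\ast}[\Delta_M]$ a well-defined pullback current on $\R_+\times M$, so that $\eta(s)$ is a bona fide distributional pairing for $\Re s\gg 1$. With the graph current $[\mathrm G_F]$ as test current and $\pi_M^{\ast}\alpha$ as integrand, Theorem~\ref{thm:currents} rewrites $\eta(s)$ as a pairing of $[\mathrm G_F]$ against $R(s)$ applied to a current built from $\alpha$. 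Since the Anosov property yields meromorphic continuation of $R(s)$ on appropriate anisotropic spaces \cite{giulietti2013anosov, dyatlov2016dynamical, Faure-Tsujii-17}, this gives the meromorphic extension of $\eta(s)$ to $\C$.

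Next, compute $\lim_{s\to 0}s\eta(s)$ by taking the residue $\Pi_0=\mathrm{Res}_{s=0}R(s)$, a finite-rank spectral projector onto the generalized resonant states of $\mathcal L_X$ at $0$. For a contact Anosov flow on a closed $3$-manifold such as $S\Sigma$, these resonant states carry a cohomological description and realize the de Rham cohomology $H^{\ast}(M)$, with $\Pi_0$ implementing the projection onto this finite-dimensional model. Since $\alpha$ is itself $\mathcal L_X$-invariant ($X$ being the Reeb field), it lies in the resonant space, and the residue therefore reduces to a topological pairing of the graph of $F$ with a cohomological representative of $\alpha$.

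Finally, identify this pairing with the right-hand side of~\eqref{eq:eta(s)residue}. Since $F$ is a bundle map over $f$, a Gysin/integration-along-the-fiber argument reduces the cohomological part of the pairing to a Lefschetz-type trace on $H^{\ast}(\Sigma)$, yielding $\Lambda_f$. The component of $\pi_M^{\ast}\alpha$ which is not a pure cohomological representative still contributes through subleading terms of the resolvent expansion, and its pairing against the Liouville measure $\mu$ produces the explicit correction $\int_M(\iota_X F_{\ast}\alpha-1)\,d\mu$. The main obstacle will be this last step: carefully tracking the spectral decomposition at $0$ and the action of $F$ on each piece, and verifying that these contributions reassemble exactly into $\Lambda_f$ together with the integral correction, rather than into some other combination allowed a priori by the cohomological structure of $S\Sigma$.
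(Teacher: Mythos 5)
Your roadmap is essentially the one the paper follows: use Theorem~\ref{thm:currents} with $\mathrm{Q}=\{(F(z),z)\}$ to identify $\eta(s)$ with the pairing $\int\pi_1^*\alpha\wedge\iota_{X^{\mathrm L}}\mathrm{K}(s)\wedge[\mathrm{Q}]$, obtain the meromorphic continuation from that of the Anosov resolvent, and compute the limit at $s=0$ from the spectral data, reducing the $H^1$ contribution to $\tr f^*|_{H^1(\Sigma)}$ via the isomorphism $\pi^*:H^1(\Sigma)\to H^1(S\Sigma)$ (your ``Gysin'' step). So the strategy is right; but two points in your last paragraph need correction or completion.

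First, the term $\int_M(\iota_XF_*\alpha-1)\,\dd\mu$ does \emph{not} come from ``subleading terms of the resolvent expansion.'' The pole of $\mathrm{R}(s)$ at $s=0$ is \emph{simple} (a theorem of \cite{Dyatlov-Zworski-17} that must be invoked), so $\lim_{s\to0}s\eta(s)$ is exactly the residue, namely the supertrace $-\tr_\mathrm s(\alpha\,\iota_X\Pi F_*)$ of the finite-rank projector $\Pi$ at $0$ (Theorem~\ref{thm:resanosov} and Lemma~\ref{lem:simplecomputation}). The volume correction arises from the rank-one pieces of that same projector, $\Pi|_{C^0}=|M|^{-1}(1\otimes\alpha\wedge\dd\alpha)$ and $\Pi|_{C^2_0}=|M|^{-1}(\dd\alpha\otimes\alpha)$, which contribute $|M|^{-1}\int_MF_*\alpha\wedge\dd\alpha=\int_M\iota_XF_*\alpha\,\dd\mu$ and $1$ respectively, while the $2\mathrm g$-dimensional piece $\sum_ju_j\otimes\alpha\wedge s_j$ contributes $-\tr f^*|_{H^1(\Sigma)}$; these reassemble into $\Lambda_f+\int(\iota_XF_*\alpha-1)\dd\mu$ using $\tr f^*|_{H^0}=\tr f^*|_{H^2}=1$ and the normalization of $\mu$. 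Second, you explicitly defer ``tracking the spectral decomposition at $0$ and the action of $F$ on each piece,'' but that is the substance of the proof: one needs the explicit resonant/co-resonant states ($u_j,s_j$ closed, $\iota_X$-free, with $\langle u_i,\alpha\wedge s_j\rangle=\delta_{ij}$), the representation $s_j=\pi^*\omega_j+\dd g_j$, and the resulting identity $F^*s_j=\sum_iA_{ij}s_i+\dd(\cdots)$ with $A$ the matrix of $f^*$ on $H^1(\Sigma)$, so that the closedness of $u_j$ kills the exact part. As written, the proposal is a correct plan whose decisive computation is left unverified, and whose stated mechanism for the correction term is wrong.
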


The study of the values of dynamical zeta functions at zero and their potential relation with topological invariants started with Fried \cite{Fried-87} and has recently seen much activity, with for example \cite{Dyatlov-Zworski-17}, \cite{Dang-Riviere}, \cite{DGRS-17}, \cite{Chaubet-Dang-19}, \cite{Paternain-Cekic} or \cite{Shen-21} to cite just a few.

We observe that for any small perturbation $f$ of the identity map, one can find a bundle map $F$ transverse to the flow. Moreover, we will show in \S\ref{sec:examples} that one can build examples for which $f$ is not isotopic to the identity map. 

Note also that Theorem \ref{thm:lefschetz} extends the main result of \cite{Dyatlov-Zworski-17}, in which the authors compute the order of vanishing at zero of the Ruelle zeta function associated to a negatively curved surface. Indeed, it is not hard to see that Theorem~\ref{thm:lefschetz} also holds if we replace $F$ by $\varphi_T \circ F$ for any $T \in \R$. The series \eqref{eq:eta(s)} associated to the maps $f = \mathrm{id}$ and $F = \varphi_{-\varepsilon}$ (note that $\varphi_{-\varepsilon}$ is transverse to the flow whenever $\varepsilon > 0$ is small enough) reads
$$
\eta(s) = e^{\varepsilon s} \sum_\gamma \ell^\sharp(\gamma) e^{-s \ell(\gamma)}
$$
where the sum runs over every closed geodesics $\gamma$ of $\Sigma$, and $\ell(\gamma)$, $\ell^\sharp(\gamma)$ denote the length and the primitive length of $\gamma$, respectively. Then \eqref{eq:eta(s)residue} yields $s \eta(s) \to 2 - 2\mathrm g$ as $s\to 0$ which is \cite[Theorem p.1]{Dyatlov-Zworski-17}. 

To further illustrate Theorem \ref{thm:lefschetz}, we provide another simple example for which $F$ is explicit. Let $f = \mathrm{id}$ and $F = R_\theta$ for $\theta \in \R \setminus \pi \Z$, where $R_\theta : S\Sigma \to S\Sigma$ is the rotation of angle $\theta$ in the fibers of $S\Sigma$. Then $R_\theta$ is transverse to the geodesic flow and we denote by $\eta_\theta(s)$ the dynamical series \eqref{eq:eta(s)} associated with $F = R_\theta$. This series involve the variety of
vectors $(x,v) \in S\Sigma$ such that for some $t$ it holds 
$$
\pi(\varphi_t(x,v)) = x \quad \text{and} \quad \angle~(v, \varphi_t(x,v)) = \theta.
$$
Then using Theorem \ref{thm:lefschetz} we find that
\begin{equation}\label{eq:residuetheta}
\lim_{s \to 0} s \eta_\theta(s) = 1 + \cos \theta - 2 \mathrm g,
\end{equation}
where $\mathrm g$ is the genus of $\Sigma$. If moreover $\Sigma$ is hyperbolic then the series $\eta_\theta(s)$ can be computed explicitly and reads
\[
\eta_\theta(s) = \cos(\theta / 2) \sum_{\gamma} \ell^\sharp(\gamma) e^{-s \varrho(\theta,\,\ell(\gamma))} \sqrt{1 + \frac{\tan^2 \theta / 2}{\tanh^2 \ell(\gamma) / 2}},
\]
where the sum runs over all closed geodesics $\gamma$ of $\Sigma$, $\ell(\gamma)$ (resp. $\ell^\sharp(\gamma)$) is the length (resp. primitive length) of $\gamma$ and
\[
\varrho(\theta, \ell) = 2 \cosh^{-1}\left(\frac{\cosh \ell / 2}{\cos \theta / 2}\right).
\]
In fact, in that case one can actually show that
\[
\eta_\theta(s) = \eta_0(s) + 2 \sin^2(\theta / 2)\, \eta_0(s + 1) + g(s)
\]
where $g$ is analytic in  $\{\Re s > - 1\};$ then \eqref{eq:residuetheta} can be deduced from the fact that $s\eta_0(s) \to 2 - 2 \mathrm g$ and $s \eta_0(s + 1) \to -1$ as $s \to 0$, as it follows from the results of Selberg. Note that it is also plausible that \eqref{eq:residuetheta} could be deduced from Selberg's trace formula. However, those alternative approaches would involve additional computations and they would not lead to a general result involving $\eta_\theta(s)$ for surfaces with \textit{variable} negative curvature.

As mentioned above, Theorem \ref{thm:lefschetz} is a consequence of more technical results which allow to interpret the series \eqref{eq:eta(s)} as a distributional pairing involving the Schwartz kernel of the resolvent of the geodesic flow. We detail those results in the next paragraph. 

\numberwithin{equation}{section}
\section{The wave front set of the resolvent}
\label{sec:results}

In this section we assume that $X$ a smooth vector field on a closed and oriented manifold $M$ of dimension $n$. Denote by $\Omega^k(M)$ the space of smooth differential $k$-forms and by $\mathcal{D}'^{n-k}(M)$ the space of $(n-k)$-currents, that is, the topological dual space of $\Omega^k(M)$. Then we have $\Omega^\bullet(M) = \oplus_k \Omega^k(M)$ and $\mathcal{D}'^\bullet(M) = \oplus_k \mathcal{D}'^k(M)$. For any operator
\[
A : \Omega^\bullet(M) \to \mathcal{D}'^\bullet(M)
\]
preserving the degree, its Schwartz kernel $\mathrm{K}_A \in \mathcal{D}'^n(M \times M)$ is defined by
\[
\int_M (A v) \wedge w = \int_{M \times M}\mathrm{K}_A \wedge \pi_1^*v \wedge \pi_2^*w, \quad v,w \in \Omega^\bullet(M),
\]
where $\pi_j : M \times M \to M$ is the projection over the first and second factors, respectively.
We can now state our main result
\begin{theorem}\label{thm:main}
Let $J(t)$ be the Schwartz kernel of $e^{tX} = \varphi_t^*$ and $\mathrm{K}(s)$ that of $(s-\mathcal L_X)^{-1}$. Let $u\in \mathcal{D}'^k(M\times M)$ such that 
\[
\WF'(u)\cap \Gamma(\varphi) = \emptyset.
\]
Then the product $\mathrm{K}(s)\wedge u$ is well defined and for any $\chi \in C^\infty_c(\R)$ which is equal to $1$ near $0$, we have, for $\Re s$ large,
\[
\lim_{\varepsilon \to 0 }\left(\int_{\R^+} \chi(t/\varepsilon) e^{-t s} J(t) \dd t \right)\wedge u =\mathrm{K}(s)\wedge u,
\]
where the convergence holds in $\mathcal{D}'^{n + k}(M \times M).$ Moreover, the convergence is uniform as long as the wavefront set of $u$ lies in a fixed compact subset not intersecting $\Gamma(\varphi)$, with uniform estimates.
\end{theorem}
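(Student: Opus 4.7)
The overall idea is to view $\mathrm K(s)$ as the limit of pushforwards $I_\varepsilon(s) = \int_{\R_+} \chi(t/\varepsilon) e^{-ts} J(t) \dd t$ from $\R_+ \times M \times M$ to $M \times M$, compute its wavefront set via Hörmander's pushforward theorem to locate $\WF'(\mathrm K(s))$ inside $\Gamma(\varphi)$, and then deduce both the well-definedness of $\mathrm K(s) \wedge u$ and the convergence of $I_\varepsilon(s) \wedge u$ from Hörmander's theorem on products together with sequential continuity of the wedge product in the Hörmander topology on distributions with wavefront set in a fixed closed conic set.

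The first step is to compute the wavefront set of $J$, regarded as a distribution on $\R \times M \times M$. Informally $J(t,x,y) = \delta(x - \varphi_t(y))$; a stationary phase computation yields
\[
\WF'(J) = \bigl\{(t, \tau;\Phi_t(y, \eta);(y, \eta)):(y, \eta) \in T^*M \setminus 0,\ \tau = -\langle \eta, X(y)\rangle\bigr\}.
\]
Since $\chi(t/\varepsilon) e^{-ts}$ is smooth and compactly supported in $t$, the pushforward under projection onto $M \times M$ is proper, and by Hörmander's pushforward theorem its wavefront set is contained in the image of $\WF'(J) \cap \{\tau = 0\}$. The condition $\tau = 0$ translates to $\langle \eta, X(y)\rangle = 0$, which is exactly the defining condition for $\Omega_+$. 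The diagonal contribution arises from the $t = 0$ endpoint: splitting $I_\varepsilon(s) = \int_0^T + \int_T^\infty$ for small fixed $T$ and using $J(0) = \delta_\Delta$, the short-time part accumulates wavefront on $\Delta(T^*M)$ as $T \to 0$. All in all, $\WF'(I_\varepsilon(s))$ is uniformly contained in a slight enlargement of $\Gamma(\varphi)$ disjoint from $\WF'(u)$, and $\WF'(\mathrm K(s)) \subset \Gamma(\varphi)$.

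With this in place, the wedge product $\mathrm K(s) \wedge u$ is well-defined by Hörmander's theorem on multiplication of distributions. For the convergence $I_\varepsilon(s) \wedge u \to \mathrm K(s) \wedge u$, two ingredients are needed: (i) weak convergence $I_\varepsilon(s) \to \mathrm K(s)$ in $\mathcal D'(M \times M)$, which follows by dominated convergence in $\int \chi(t/\varepsilon) e^{-ts} \langle J(t), \omega\rangle \dd t$ for test forms $\omega$ and $\Re s$ large; and (ii) uniform seminorm bounds in conic neighborhoods disjoint from $\Gamma(\varphi)$, which upgrade (i) to convergence in the Hörmander topology. Continuity of the wedge product with $u$ in this topology then yields the claimed convergence and uniform estimates.

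The principal difficulty is (ii): obtaining uniform symbol-type decay of the Fourier transform of $I_\varepsilon(s)$ in conic directions outside $\Gamma(\varphi)$. The mechanism is nonstationary phase in $t$: when $\langle \eta, X(y)\rangle \neq 0$, repeated integration by parts in $t$ produces arbitrary polynomial decay in $(\xi, \eta)$, and the largeness of $\Re s$ provides the absolute convergence of the resulting bounds as the cutoff $\varepsilon \to 0$ extends the $t$-integration to infinity. Making this estimate robust uniformly in $t \to \infty$, where the orbits may be geometrically intricate, is the delicate step.
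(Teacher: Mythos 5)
Your overall strategy is genuinely different from the paper's. The paper does not work with the oscillatory integral $J(t)$ directly: it reduces Theorem \ref{thm:main}, via a microlocal partition of unity, to an operator-norm statement (Lemma \ref{lemma:pseudo}) about $\int_0^\infty \chi\, A e^{t(X-s)}B\,\dd t \to A(s-X)^{-1}B$ in $\mathcal{B}(H^{-N},H^N)$ for pseudors $A,B$ with $(\WF(A)\times\WF(B))\cap\Gamma(\varphi)=\emptyset$. The directions with $\langle\xi,X\rangle\neq 0$ are handled by an elliptic parametrix $B_{\mathrm{ell}}=(s-X)^{2N}C(s)+R$ and integration by parts in $t$; the characteristic directions are handled by a quantitative Egorov theorem valid up to the Ehrenfest time $|t|\leqslant\delta|\log h|/\lambda$, with an explicit remainder bound $\|e^{-tX}R_t\|_{H^{-N}\to H^N}\leqslant C e^{(6N\lambda_{\max}+\mu_{\max}+\varepsilon)|t|}$ that is absorbed by $e^{-t\Re s}$ once $\Re s>6N\lambda_{\max}+\mu_{\max}$. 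Your route through H\"ormander's pushforward/product theorems and convergence in the H\"ormander topology is a legitimate alternative framework, and your wavefront computations for $J$ and for the pushforward are correct in outline (including the origin of the $\Delta(T^*M)$ component at the $t=0$ endpoint).

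The gap is that the step you yourself flag as ``the delicate step'' is the entire content of the theorem, and you give no mechanism for it. Two things go wrong with ``repeated integration by parts in $t$'' as stated. First, integration by parts in $t$ only gains in directions that are conormal to some graph $\mathrm G_{\varphi_t}$ with $\tau\neq 0$; directions that are simply not conormal to any graph require nonstationary phase in the space variables, and the splitting of a conic neighbourhood of $\WF(u)$ into these two regimes must be made uniformly over $t\in[0,\infty)$, which is nontrivial because the conormals $N^*\mathrm G_{\varphi_t}$ move and accumulate on $\overline{\Omega_+}$ (the closure matters here, and your sketch only mentions $\Omega_+$). Second, each integration by parts (in $t$ or in space) costs a factor of order $\|\dd\varphi_t\|\sim e^{\lambda_{\max}t}$, so obtaining decay $\langle\xi\rangle^{-N}$ costs $e^{CN t}$, and the absorption by $e^{-t\Re s}$ forces a threshold on $\Re s$ that grows linearly in $N$ --- exactly as in the paper's Lemma \ref{lemma:pseudo}. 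This is compatible with the statement (convergence in $\mathcal{D}'$ only needs finitely many seminorms, determined by the order of $u$ on the relevant compact set), but your proposal asserts ``arbitrary polynomial decay'' for a single ``$\Re s$ large'' without tracking this $N$-dependence, and without it the claimed uniform symbol estimates are not justified. To close the argument you would need either to carry out the uniform two-regime nonstationary phase with explicit exponential losses, or to import something equivalent to the paper's Ehrenfest-time Egorov theorem; as written, the proof is incomplete at its central point.
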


Before turning to the proof, let us discuss the main application of this statement. We define the left flow $(\varphi^\mathrm L_t)$ acting on $M \times M$ by
\[
\varphi^\mathrm L_t(x,y) = (\varphi_t(x), x), \quad x, y \in M,
\]
denote by $X^{\mathrm L}$ its generator and set
\[
G_\varphi = \{(t, \varphi_t(x), x)\ |\ t \geqslant 0,\ x \in M\} \subset \R \times M \times M.
\]
Finally, if $P \subset M_1$ is an oriented submanifold of a manifold $M_1$, we will denote by $[P] \in \mathcal{D}'^\bullet(M_1)$ the associated integration current, which is defined by 
$$
\langle [P], \omega \rangle = \int_{P} \iota_P^*\, \omega, \quad \omega \in \Omega^\bullet(M_1)
$$
where $\iota_P : P \hookrightarrow M_1$ is the inclusion.

The following result allows to express general dynamical series in terms of the resolvent.

\begin{theorem}\label{thm:currents}
Assume that ${\mathrm{Q}} \subset M\times M$ is a submanifold of dimension $d \geqslant n$, and that its conormal $N^\ast({\mathrm{Q}})$ intersects $\Gamma(\varphi)'$ trivially. Then the following holds. \vspace{3pt}
\begin{enumerate}[label=$\mathrm{(\roman*)}$]
\item \label{item:i} Let $\tau : \R \times M^2 \to \R$ and $\pi : \R \times M^2 \to M^2$ be the projections over the first and second factor, respectively. Then, $\pi^{-1}({\mathrm{Q}})$ intersects $G_\varphi$ transversally, so that $G_\varphi \cap \pi^{-1}({\mathrm{Q}})$ is a smooth submanifold of $\R_+ \times M^2$.
\item \label{item:ii} For large $\Re s$, the wedge product $ \iota_{X^\mathrm L}\mathrm{K}(s) \wedge [{\mathrm{Q}}]$ is well defined, the current $e^{-s\tau}\left[G_\varphi \cap \pi^{-1}({\mathrm{Q}})\right]$ lies in the dual of $\pi^*\Omega^\bullet (M)$, and we have
\[
\iota_{X^\mathrm L}\mathrm{K}(s) \wedge [{\mathrm{Q}}] = \pi_*\left(e^{-s\tau} \left[G_\varphi \cap \pi^{-1}({\mathrm{Q}})\right] \right),
\]
where the equality holds in $\mathcal{D}'^{3n - d - 1}(M\times M).$ 
\end{enumerate}\vspace{6pt}
\end{theorem}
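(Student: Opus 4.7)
For \ref{item:i}, I will compute tangent spaces at a point $(t_0, y_0, x_0) \in G_\varphi \cap \pi^{-1}({\mathrm{Q}})$ with $y_0 = \varphi_{t_0}(x_0)$: namely $T G_\varphi = \{(\sigma, \sigma X(y_0) + d\varphi_{t_0}(v), v) : \sigma \in \R,\ v \in T_{x_0} M\}$ and $T \pi^{-1}({\mathrm{Q}}) = \R \oplus T_{(y_0, x_0)}{\mathrm{Q}}$. A covector $(\sigma, \eta_1, \eta_2)$ annihilating both must satisfy $\sigma = 0$, $(\eta_1, \eta_2) \in N^*{\mathrm{Q}}$, $\langle \eta_1, X(y_0)\rangle = 0$, and $\eta_2 = -(d\varphi_{t_0})^T\eta_1$. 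Setting $(y_0, \eta_1) = \Phi_{t_0}(x_0, \xi)$ (so $\xi = (d\varphi_{t_0})^T\eta_1$), flow-invariance of the Hamiltonian $\langle \cdot, X\rangle$ yields $\langle \xi, X(x_0)\rangle = 0$ and $\eta_2 = -\xi$, whence $((y_0, \eta_1), (x_0, -\xi)) \in \Omega_+' \cup \Delta(T^*M)' \subset \Gamma(\varphi)'$. The hypothesis $N^*{\mathrm{Q}} \cap \Gamma(\varphi)' = \{0\}$ then forces the covector to vanish, giving transversality and the smooth intersection of dimension $d-n+1$.

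For \ref{item:ii}, since $\WF([{\mathrm{Q}}]) \subset N^*{\mathrm{Q}}$, the hypothesis immediately yields $\WF'([{\mathrm{Q}}]) \cap \Gamma(\varphi) = \emptyset$, so Theorem \ref{thm:main} applies: $\mathrm{K}(s)\wedge[{\mathrm{Q}}]$ is well-defined as the limit of $I_\varepsilon(s)\wedge[{\mathrm{Q}}]$, where $I_\varepsilon(s)$ is the regularized integral of that theorem, and continuity of $\iota_{X^\mathrm{L}}$ on currents yields the same for $\iota_{X^\mathrm{L}}\mathrm{K}(s)\wedge[{\mathrm{Q}}]$. The key formal identity
\[
\iota_{X^\mathrm{L}} \mathrm{K}(s) = \pi_*(e^{-s\tau}[G_\varphi])
\]
I plan to verify via the parameterization $\Psi : \R_+ \times M \to \R_+ \times M^2$, $\Psi(t,x) = (t, \varphi_t(x), x)$. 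For $\omega \in \Omega^{n+1}(M^2)$, the relation $\Psi^*(dy^i) = X^i(\varphi_t(x))\, dt + (d_x\varphi_t)^i_j\, dx^j$, together with the fact that any $(n+1)$-form on $M^2$ pulls back to a top form on the $(n+1)$-dimensional $\R_+ \times M$, forces only the $dt$-piece of $(\pi\circ\Psi)^*\omega$ to survive, giving $(\pi \circ \Psi)^*\omega = dt \wedge \sigma_t^*(\iota_{X^\mathrm{L}}\omega)$ where $\sigma_t(x) = (\varphi_t(x), x)$. Integrating then produces $\langle \pi_*(e^{-s\tau}[G_\varphi]), \omega\rangle = \int_0^\infty e^{-st}\langle J(t), \iota_{X^\mathrm{L}}\omega\rangle\, dt$, which coincides with $\langle \iota_{X^\mathrm{L}}\mathrm{K}(s), \omega\rangle$ by Theorem \ref{thm:main}.

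To transfer this to the statement involving $[{\mathrm{Q}}]$, I will use the projection formula: by \ref{item:i}, transversality gives $[G_\varphi \cap \pi^{-1}({\mathrm{Q}})] = [G_\varphi] \wedge \pi^*[{\mathrm{Q}}]$, and then $\pi_*(e^{-s\tau}[G_\varphi] \wedge \pi^*[{\mathrm{Q}}]) = \pi_*(e^{-s\tau}[G_\varphi]) \wedge [{\mathrm{Q}}]$. Applied at the regularized level $I_\varepsilon(s)$ and then letting $\varepsilon \to 0$, the right-hand side converges to $\iota_{X^\mathrm{L}}\mathrm{K}(s)\wedge[{\mathrm{Q}}]$ by Theorem \ref{thm:main}, while the exponential decay of $e^{-s\tau}$ for $\Re s \gg 1$ along the non-compact submanifold $G_\varphi \cap \pi^{-1}({\mathrm{Q}})$ ensures the left-hand side converges to $\pi_*(e^{-s\tau}[G_\varphi \cap \pi^{-1}({\mathrm{Q}})])$.

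The main obstacle will be the rigorous justification in the distributional setting: showing that $[G_\varphi] \wedge \pi^*[{\mathrm{Q}}]$ is a well-defined wedge of currents (using the transversality of \ref{item:i} together with wavefront set control) and that the non-proper pushforward $\pi_*$ commutes with the regularization limit. The uniform estimates at the end of Theorem \ref{thm:main}, combined with the decay from $e^{-s\tau}$, should supply the needed control.
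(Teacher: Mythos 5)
Your proposal is correct and follows essentially the same route as the paper: both rest on Theorem \ref{thm:main} to justify the regularized limits, combined with an elementary pointwise identity expressing the (truncated) kernel $\iota_{X^\mathrm L} J(t)\,\dd t$ as a pushforward of an integration current. The paper packages that elementary identity as Lemma \ref{lem:elementary}, proved in a flow-box coordinate system $(Y=\partial_{z^1}$, $Q=\{z^1=\cdots=z^k=0\})$ and applied with $N=M\times M$, $Q=\Delta$, $\psi_t=\varphi_t^\mathrm{L}$; you instead parameterize $G_\varphi$ directly by $(t,x)\mapsto(t,\varphi_t(x),x)$ and extract the $\dd t$-factor via $\iota_{\partial_t}$. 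These are two presentations of the same computation. You also give an explicit proof of \ref{item:i}, which the paper declares elementary and omits; your tangent/cotangent-space argument is exactly what is implicitly meant there. Two small things you correctly flag but should be careful not to misorder logically: the convergence of $\pi_*\bigl(e^{-s\tau}[G_\varphi\cap\pi^{-1}({\mathrm{Q}})]\bigr)$ for large $\Re s$ is not a stand-alone consequence of the weight $e^{-s\tau}$ (one has no a priori growth bound on the intersection manifold); rather, it is Theorem \ref{thm:main} that gives convergence of the regularized family, and that limit then identifies the pushforward current (this is precisely the point the paper makes in the discussion around \eqref{eq:seriesperiodic}). Similarly, the identification $[G_\varphi\cap\pi^{-1}({\mathrm{Q}})]=[G_\varphi]\wedge\pi^*[{\mathrm{Q}}]$ should be used at the truncated (compactly supported) level before passing to the limit, which is what your regularization $I_\varepsilon$ achieves.
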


Let us briefly explain how Theorem \ref{thm:currents} can be used to recover some known formulae in two important situations.

First, for $\varepsilon > 0$, let ${\mathrm{Q}} = \Delta_{-\varepsilon} = \{(\varphi_{-\varepsilon}(x), x)~:~x \in M\}$. Assume that $N^*{\mathrm{Q}} \cap \Gamma(\varphi) = \{0\}$, which is always satisfied for example if $(\varphi_t)$ is Anosov and $\varepsilon$ is small enough. Then \ref{item:ii} yields that
\[
 \iota_{X^\mathrm L}\mathrm{K}(s) \wedge [\Delta_{-\varepsilon}] = e^{\varepsilon s} \sum_{\gamma} e^{-s \tau(\gamma)} [\Lambda_\gamma^\varepsilon],
\]
where the sum runs over all periodic orbits of the flow, and 
\[
\Lambda_\gamma^\varepsilon = \{(\varphi_{t-\varepsilon}(x), \varphi_t(x))\ |\ t \in [0, \tau^\sharp(\gamma)]\} \subset M \times M.
\]
Here $\tau^\sharp(\gamma)$ is the primitive period of $\gamma$ and $x$ is any point in the image of $\gamma$. In particular, if $\omega \in \Omega^1(M \times M)$ is any smooth one form such that $\iota_{X \times X} \omega = 1$, then we get
\begin{equation}\label{eq:seriesperiodic}
\langle\mathrm{K}(s) \wedge \iota_{X^\mathrm{R}} [\Delta_{-\varepsilon}], \omega \rangle = e^{\varepsilon s} \sum_\gamma \tau^\sharp(\gamma) e^{-s \tau(\gamma)}.
\end{equation}
The above series is, up to the term $e^{\varepsilon s}$, the logarithmic derivative of the so-called Ruelle zeta function, which was shown to admit a meromorphic continuation to $\C$ in \cite{giulietti2013anosov}, and later in \cite{dyatlov2016dynamical} with semi-classical methods. We emphasize that the convergence of the sum in the right-hand side of \eqref{eq:seriesperiodic} is a consequence of our results and yields an upper bound on the growth of periodic orbits. On the contrary, in \cite{dyatlov2016dynamical}, Equation \eqref{eq:seriesperiodic}, which appears as a crucial step in the proof of the meromorphic continuation, is proved by using \textit{a priori} estimates on the growth of periodic orbits\footnote{In \cite[\S4]{dyatlov2016dynamical}, the authors express the logarithmic derivative of the Ruelle zeta function as the  as the super flat trace of the operator $\varphi_{\varepsilon}^*(s-\mathcal L_X)^{-1}$ acting on $\ker \iota_X$; however this super flat trace coincides with the left-hand side of \eqref{eq:seriesperiodic} by Lemma \ref{lem:simplecomputation} below.}.

Next, assume that ${\mathrm{Q}} = {\mathrm{Q}}_1 \times {\mathrm{Q}}_2$ where $\dim {\mathrm{Q}}_1 + \dim {\mathrm{Q}}_2 + 1 = n$, where ${\mathrm{Q}}_{j}$ is transverse to the flow for $j = 1,2$. Then \ref{item:ii} gives that
\[
\mathrm{K}(s) \wedge \iota_{X^\mathrm{R}} [{\mathrm{Q}}] = \sum_{\substack{t \geqslant 0 \\ (x_1, x_2) \in {\mathrm{Q}}_1 \times {\mathrm{Q}}_2 \\ \varphi_t(x_1) = x_2}} e^{-st} [(x_1, x_2)].
\]
Again, we get
\begin{equation}\label{eq:seriesdang}
\langle \iota_{X^\mathrm L}\mathrm{K}(s) \wedge [{\mathrm{Q}}], 1 \rangle = \sum_\gamma e^{-s \tau(\gamma)},
\end{equation}
where the sum runs over all dynamical arcs $\gamma$ joining ${\mathrm{Q}}_1$ and ${\mathrm{Q}}_2$. For Anosov flows, the series in the right-hand side was studied in \cite{Dang-Riviere} and Equation \eqref{eq:seriesdang} is essentially \cite[Theorem 4.15]{Dang-Riviere}. A similar statement for geodesic flows of surfaces with boundary was obtained later by the first author \cite[Proposition~3.4]{chaubet2022poincare} using methods in the same spirit of those we will present below. In the aforementioned works, it is shown that the value at zero of certain Poincar\'e series counting geodesic arcs linking two points in a surface coincides with the inverse of the Euler characteristic of the surface and formula \eqref{eq:seriesdang} is the starting point of the proof.

Another consequence of Theorem \ref{thm:currents} is of course Theorem \ref{thm:lefschetz}, whose proof heavily relies on the method employed in \cite{Dyatlov-Zworski-17}.
\\

The paper is organized as follows. In the next section, we will introduce our main technical tool, and give its proof. We will assume that the reader is familiar with the notions of wavefront set, and proficient in the use of (semi-classical) pseudo-differential operators. We will prove Theorems \ref{thm:main} and \ref{thm:currents} in paragraphs \S\ref{sec:proof-main} and \S\ref{sec:proof-integration-current}, respectively. Finally, in sections \S\ref{sec:proof-lefschetz} and \S\ref{sec:examples}, we prove Theorem \ref{thm:lefschetz} as well as the existence of several non trivial examples satisfying our transversality assumptions. 

\section{Egorov for flows (semi-classical version)}

That the conjugation of a pseudo-differential operator by a (smooth) flow still is pseudo-differential follows from the invariance of the class of pseudo-differential operators under the action of diffeomorphisms. The most usual proof of this fact relies on the so-called Kuranishi trick. While this remains true when the time is large, it is only a qualitative statement. In this section, we will give a quantitative result. 

For $X$ a smooth vector field on a compact manifold $M$ with flow $\varphi_t$, we denote by $\lambda_{\mathrm{max}}(X)$ is maximal Lyapunov exponent, i.e
\[
\lambda_{\mathrm{max}}(X) = \limsup_{t\to \infty}\frac{1}{t}\log \sup_z \max( \|\dd\varphi_t(z)\|, \|\dd\varphi_{-t}(z)\| ). 
\]
We will need to quantify the action on $L^2$, Setting
\[
\mu_{\mathrm{max}}(X)= \limsup_{t\to\pm \infty} \frac{1}{t} \log \|e^{tX}\|_{L^2}. 
\]
Also we define $\Phi = (\Phi_t)_{t \in \R}$ the Hamiltonian lift of $(\varphi_t)$, acting on $T^*M$, which is defined by
\[
\Phi_t(z, \xi) = \dd \varphi_t(z)^{-\top} \xi, \quad (z, \xi) \in T^*M,
\]
where $^{-\top}$ denotes the inverse transpose. Let us introduce some usual classes of pseudo-differential operators. The ones whose symbols satisfy $S^m$ estimates in local charts
\[
|\partial_z^\alpha \partial_\xi^\beta a| \leqslant C_{\alpha,\beta} \langle \xi\rangle^{m-|\beta|},
\]
form the algebra $\Psi^0$. When working with $h$-pseudo-differential operators, for $0\leqslant \delta < 1/2$, we will write $A\in \Psi^m_{\delta,\, \mathrm{sc}}$ if its symbol satisfies the $S^m_\delta$ estimates
\[
|\partial_z^\alpha\partial_\xi^\beta a| \leqslant C_{\alpha,\beta} h^{-\delta(|\alpha|+|\beta|)} \langle\xi\rangle^{m-|\beta|}. 
\]
We can now state our result. 
\begin{theorem}\label{thm:egorov}
Let $X$ be a smooth vector field, and $A\in \Psi^0$ with principal symbol $a\in S^0$. Let $\lambda>\lambda_{\mathrm{max}}(X)$ and $\mu>\mu_{\mathrm{max}}(X)$. Given $0\leqslant \delta < 1/2$, for $h>0$ and $|t| \leqslant \delta |\log h|/\lambda$, we can find $A_t$ and $R_t$ such that
\[
e^{tX} A e^{-tX} = A_t + R_t, 
\]
where $R_t$ is smoothing, with estimates $\| R_t \|_{H^{-N}\to H^N} = \mathcal{O}(h^{-3N-\mu/\lambda})$, and $A_t\in \Psi^0_{\delta,\mathrm{sc}}$ is a $h$-pseudo-differential operator with principal symbol satisfying $a_t = a\circ \Phi_t \mod \langle\xi\rangle^{-1} $, and $\WF(A_t) = \Phi_{-t}( \WF(A))$.
\end{theorem}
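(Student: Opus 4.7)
The plan is to construct $A_t$ as a pseudodifferential approximation of $B_t := e^{tX}Ae^{-tX}$ via a symbolic Heisenberg evolution, and to control the remainder $R_t := B_t - A_t$ by a Duhamel argument. The key quantitative input is that in the Ehrenfest window $|t|\leqslant \delta|\log h|/\lambda$, the derivatives of $\Phi_{\pm t}$ of order $k$ grow at most like $e^{k\lambda|t|}\leqslant h^{-k\delta}$, which is precisely the loss tolerated by the symbol class $S^0_\delta$ in which $A_t$ will be constructed.

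Starting from the ansatz $a_t^{(0)} = a\circ \Phi_t$, which captures the classical transport of the principal symbol along $\Phi_t$, one determines successive corrections $a_t^{(j)}\in S^{-j}_\delta$ by cancelling, order by order, the subprincipal terms appearing in the composition formula for $[\mathcal L_X,\Op(\sum_{k<j} h^k a_t^{(k)})]$. Borel-summing $a_t\sim \sum_{j\geqslant 0} h^j a_t^{(j)}$, the operator $A_t := \Op(a_t)$ lies in $\Psi^0_{\delta,\,\mathrm{sc}}$ and satisfies
\[
\partial_t A_t - [\mathcal L_X, A_t] = E_t, \qquad A_0 = A,
\]
with $E_t$ smoothing and of size $\mathcal O(h^\infty)$ uniformly on the window. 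The principal-symbol claim and the wavefront identity $\WF(A_t) = \Phi_{-t}(\WF(A))$ follow directly from this construction, and Duhamel gives
\[
R_t = -\int_0^t e^{(t-s)X} E_s\, e^{-(t-s)X}\,\dd s.
\]
To bound $\|R_t\|_{H^{-N}\to H^N}$ one estimates the three factors separately: $\|E_s\|_{H^{-N}\to H^N}=\mathcal O(h^M)$ for any prescribed $M$, while $\|e^{\pm\tau X}\|_{H^{\pm N}\to H^{\pm N}}\leqslant C_N e^{(N\lambda+\mu)|\tau|}$ by commuting $N$ derivatives through the flow and invoking the definitions of $\lambda_{\max}(X)$ and $\mu_{\max}(X)$. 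In the allowed time window this produces factors $h^{-N}$ and $h^{-\mu/\lambda}$ per flow factor, and taking $M$ large enough yields the announced bound $\mathcal O(h^{-3N-\mu/\lambda})$.

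The main obstacle is the joint bookkeeping of symbol orders and $h$-losses throughout the symbolic iteration: each step of the composition formula brings an additional derivative on $\Phi_t$, hence a factor $h^{-\delta}$, and the condition $\delta<1/2$ is exactly what ensures that the nominal gain $h$ per symbolic order strictly beats the $h^{-2\delta}$ loss from two differentiations, so that the asymptotic series stays in $S^0_\delta$ and can be Borel-summed. A secondary but delicate point is the Sobolev estimate for $e^{\tau X}$ when $N\neq 0$: the flow is far from an isometry on $H^N$, and the extra $h^{-N}$ factors in the final bound on $R_t$ come precisely from commuting $N$ derivatives through the flow in this regime, which is where the definition of $\lambda_{\max}(X)$ enters on top of the $L^2$ bound governed by $\mu_{\max}(X)$.
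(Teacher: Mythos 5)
Your symbolic Heisenberg evolution is essentially the Bouzouina--Robert strategy that the paper also invokes for the semiclassical part, but there is a genuine gap: you never explain how the \emph{classical} operator $A\in\Psi^0$ becomes a semiclassical one. The symbol $a$ satisfies $S^0$ estimates with gains measured in $\langle\xi\rangle^{-1}$, not in $h$; the composition formula for classical quantizations produces remainders of size $\langle\xi\rangle^{-1}e^{2\lambda|t|}$, which are not small on bounded frequency regions, so your expansion $a_t\sim\sum_j h^j a_t^{(j)}$ and the classes $S^{-j}_\delta$ have no meaning until $h$ is introduced by hand. The paper does this by splitting $a=a\chi+a(1-\chi)$ with $a\chi$ supported in $|\xi|\leqslant 1/h$, rescaling the high-frequency piece into a genuine $h$-symbol $b(z,\xi)=[a(1-\chi)](z,\xi/h)$, and treating $\Op(a\chi)$ (together with the residual smoothing part of $A$) as a separate remainder $R_0'$ with $\|R_0'\|_{H^{-N}\to H^N}=\mathcal O(h^{-2N})$. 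Conjugating $R_0'$ by the flow over the Ehrenfest window is exactly what produces the bound $\mathcal O(h^{-3N-\mu/\lambda})$ in the statement.

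This omission shows up in your final estimate and makes it internally inconsistent: if $E_s$ really were $\mathcal O(h^M)$ in $H^{-N}\to H^N$ for every $M$, your Duhamel formula would give $R_t=\mathcal O(h^\infty)$, since the two flow factors only cost $h^{-2N\delta-2\mu\delta/\lambda}$, which is absorbed by taking $M$ large --- there is then no source for the $h^{-3N-\mu/\lambda}$ loss you claim to recover. The fact that the theorem asserts a remainder that blows up as $h\to0$ signals that some piece of $A$ resists the semiclassical treatment; that piece is the low-frequency one. To repair the argument, insert the cutoff and rescaling at the start, run your Heisenberg/Duhamel scheme on $\Op_h(b)$ only (where it is correct, and where $\delta<1/2$ plays exactly the role you describe), and estimate $e^{tX}R_0'e^{-tX}$ by the crude Sobolev bounds on the flow, which is where the definitions of $\lambda_{\mathrm{max}}(X)$ and $\mu_{\mathrm{max}}(X)$ actually enter.
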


\begin{proof}
We start by choosing a Weyl quantization procedure $\Op$ --- as in \S14 of \cite{Zworski-book} --- and write $A = \Op(a) + R_0$, $R_0$ being a $C^\infty$ smoothing operator. Next, we introduce a positive parameter $h>0$ and a cutoff, writing $a= a \chi + a (1-\chi)$, so that $a \chi$ is supported in $|\xi|\leqslant 1/h$, and $a(1-\chi)$ in $|\xi|> 1/2h$. 

By rescaling, we observe that $\Op(a(1-\chi))\in \Psi^0_{0,\, \mathrm{sc}}$ of the form $\Op_h(b)$ with 
\[
b(z,\xi) = [a(1-\chi)](z, \xi/h). 
\]

Using the strategy of \cite{Bouzouina-Robert-02}, we then obtain that as long as $0\leqslant \delta <1/2$ and $|t|\leqslant \delta|\log h|/ \lambda$, we can find a symbol $b_t$ satisfying $S_\delta^0$ estimates and such that 
\[
e^{tX} \Op_h(b) e^{-tX} = \Op_h(b_t) + \mathcal{O}_{\mathrm{smoothing}}(h^\infty). 
\]
We will not detail the arguments of \cite{Bouzouina-Robert-02}, which are presented originally for operators in $\R^n$, with slightly different classes of symbols. For an exposition on surfaces, we refer to the appendix A of \cite{Dyatlov-Jin-Nonnenmacher-19}. The two main ingredients in the proof are the following
\begin{itemize}
	\item For the range of times allowed, $b\circ \Phi_t\in S_\delta^0$ uniformly. 
	\item We have a product formula, for $c\in S^m_\delta$:
	\[
	[X, \Op_h(c)] = \frac{h}{i} \Op_h( \{ p, c \}) + \mathcal{O}(h^{2-2\delta} \Psi^{m-2}_{\delta,\mathrm{sc}}). 
	\]
\end{itemize}
It is a feature of the proof that
\[
b_t = b\circ \Phi_t + \mathcal{O}(h^{1-\delta} S^{-1}_\delta),
\]
and $b_t$ shares the support of $b\circ \Phi_t$. 

Now, $\Op(a\chi)$ is \emph{not} semi-classical, so that there is not much microlocal information we can find on $e^{tX} \Op(a \chi) e^{-tX}$ for $t$ large. The best we can do is thus estimating $H^s\to H^r$ norms. Letting 
\[
R_0' = R_0+ \Op(a\chi),
\]
we get $\| R_0' \|_{H^s\to H^r} = \mathcal{O}( h^{s-r} )$ as $h\to 0$ when $r\geqslant s$. Then in the range $|t|\leqslant \delta|\log h|/\lambda$, we find
\begin{align*}
\| e^{tX} R_0' e^{-tX} \|_{H^{-N}\to H^N} &\leqslant C_N h^{-2N} \| e^{-tX} \|_{H^N} \| e^{tX} \|_{H^{-N}} \\
										&\leqslant C h^{-2N} e^{2N\lambda |t| + 2\mu|t|}\\
										&\leqslant C h^{-2N(1+\delta) - 2\mu\delta/\lambda} \leqslant C h^{-3N - \mu/\lambda}. 
\end{align*}
Hence by setting
\[
A_t = \Op_h(b_t)\quad \text{and} \quad R_t = e^{tX} R_0' e^{-tX}  + e^{tX} \Op_h(b) e^{-tX} - A_t,
\]
the proof is complete.
\end{proof}

Taking $|t|= \delta |\log h|/\lambda$, we observe that
\[
\| e^{-tX} R_t \|_{H^{-N}\to H^N} \leqslant C e^{[2N(1 + 1/\delta)\lambda + \mu ] |t|  },
\]
so that for any $\varepsilon>0$,
\begin{equation}\label{eq:size-remainder-good-times}
\| e^{-tX} R_t \|_{H^{-N}\to H^N} \leqslant C_{N,\varepsilon} \exp\Big[ \left(6N \lambda_{\mathrm{max}}(X) + \mu_{\mathrm{max}}(X) + \varepsilon\right) |t|\Big]. 
\end{equation}

\section{Proof of main theorem}
\label{sec:proof-main}

Using a microlocal partition of unity, we see that Theorem \ref{thm:main} will be a consequence of the following result.
\begin{lemma}\label{lemma:pseudo}
Let $A,B\in \Psi^0$ be two pseudo-differential operators of order $0$ such that 
\begin{equation}\label{eq:wf-petit-bloc}
\bigl(\WF(A)\times \WF(B) \bigr) \cap \Gamma(\varphi) = \emptyset. 
\end{equation}
Also let $\chi\in C^\infty_c(\R)$ be equal to $1$ near $0$. Then, for $N>0$ and $\Re s > 6N\lambda_{\mathrm{max}}(X) + \mu_{\mathrm{max}}(X)$, we have the convergence
\[
\int_0^{+\infty}\chi(t/\varepsilon) A e^{t(X-s)} B \dd t \underset{\varepsilon\to 0}{\longrightarrow} A (s-X)^{-1} B
\]
in the space of bounded operators $ H^{-N} \to H^N$.
\end{lemma}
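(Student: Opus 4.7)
The plan is to apply Theorem \ref{thm:egorov} at the Ehrenfest scale, i.e., with the semiclassical parameter $h$ depending on $t$ via $h = \exp(-\lambda t/\delta)$ for some fixed $\delta \in (0, 1/2)$ and $\lambda > \lambda_{\max}(X)$, and to produce an integrable $H^{-N}\to H^N$ operator norm bound on $A e^{tX} B$. Once such a bound is in hand, the convergence of the truncated integrals to $A(s-X)^{-1}B$ will follow by dominated convergence as the cutoff tends to its limit.

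First, applying Theorem \ref{thm:egorov} with $-t$ in place of $t$ yields a decomposition
\[
e^{-tX}Ae^{tX} = A_{-t} + R_{-t},
\]
where $A_{-t} \in \Psi^0_{\delta,\mathrm{sc}}$ is semiclassical pseudodifferential with operator wavefront $\Phi_t(\WF A)$ and principal symbol $a\circ\Phi_{-t}$, and $R_{-t}$ is smoothing. Multiplying by $e^{tX}$ on the left and by $B$ on the right gives
\[
A e^{tX} B = e^{tX} A_{-t} B + e^{tX} R_{-t} B.
\]
The remainder is controlled by \eqref{eq:size-remainder-good-times}: since $B$ is bounded on $H^{-N}$ one obtains
\[
\|e^{tX} R_{-t} B\|_{H^{-N}\to H^N} \leqslant C_{N,\varepsilon}\exp\bigl((6N\lambda_{\max}(X)+\mu_{\max}(X) + \varepsilon) t\bigr),
\]
and this piece is absolutely integrable against $e^{-ts}$ on $[0,\infty)$ for $\Re s > 6N\lambda_{\max}(X)+\mu_{\max}(X)$.

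The main term $e^{tX} A_{-t} B$ is where the wavefront hypothesis enters. I would show that $\|A_{-t} B\|_{H^{-N}\to H^N} = \mathcal O(h^\infty)$ uniformly in the allowed $t$-range. Since $A_{-t}$ and $B$ are both (semiclassical) pseudodifferential operators, with respective operator wavefronts $\Phi_t(\WF A)$ and $\WF B$, after a microlocal partition of unity it suffices to check that these wavefronts are essentially disjoint. On the characteristic set $\{\langle\xi,X\rangle = 0\}$ the assumption $(\WF A \times \WF B) \cap \overline{\Omega_+} = \emptyset$ forbids any intersection along the forward flow which Egorov tracks; off the characteristic set, where $p = \langle\xi,X\rangle$ is bounded away from zero, one integrates by parts in $t$ using $\partial_t e^{tX} = X e^{tX}$ and the semiclassical ellipticity of $X$ to convert the $t$-oscillation into arbitrary $h$-decay. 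Combined with $\|e^{tX}\|_{H^N\to H^N}\leq Ce^{(\mu_{\max}+\varepsilon)t}$ and the choice $h = \exp(-\lambda t/\delta)$, this makes the main term exponentially smaller than the remainder.

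Putting the two bounds together, the integrand $e^{-ts} A e^{tX} B$ is absolutely integrable in $H^{-N}\to H^N$ operator norm for $\Re s > 6N\lambda_{\max}(X)+\mu_{\max}(X)$, and dominated convergence delivers the claimed limit, equal to $A(s-X)^{-1}B$. The main obstacle is the $\mathcal O(h^\infty)$ estimate on $A_{-t} B$: geometrically one must show that the forbidden configuration of wavefronts only occurs on the characteristic set (where the $\Omega_+$ part of the hypothesis rules it out), and analytically one must implement a quantitative integration by parts in $t$ on the non-characteristic piece, uniformly across the Ehrenfest window $t \in [0, \delta|\log h|/\lambda]$.
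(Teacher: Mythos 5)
Your proposal follows essentially the same route as the paper: the remainder is controlled by \eqref{eq:size-remainder-good-times}, the characteristic directions are handled by the Egorov theorem at the Ehrenfest scale together with the hypothesis on $\overline{\Omega_+}$, and the non-characteristic directions by ellipticity of $X$ and integration by parts in $t$. The only (cosmetic) difference is organizational: the paper first splits $B=B_{\mathrm{ell}}+B_{\mathrm{char}}$ and disposes of $B_{\mathrm{ell}}$ via an explicit parametrix $B_{\mathrm{ell}}=(s-X)^{2N}C(s)+R$ before invoking Egorov, which makes the pointwise claim $\|A_{-t}B_{\mathrm{char}}\|=\mathcal O(h^\infty)$ literally true, whereas in your version that bound holds only after the elliptic part has been removed (as you acknowledge).
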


\begin{proof}
We start by an ellipticity argument. For any $\eta>0$, $X$ is elliptic in the directions $(z, \xi)$ for which $|\langle \xi, X(z) \rangle| > \eta |\xi|$. We can thus decompose
\[
B= B_{\mathrm{ell}} + B_{\mathrm{char}}, 
\]
with $B_{\mathrm{char}}$ microsupported in an arbitrarily small conical neighbourhood of $\{\langle \xi, X \rangle=0\}$, and $B_{\mathrm{ell}}$ microsupported in the region of ellipticity of $X$. 

By ellipticity, we can find a pseudo $C(s)$ of order $-2N$, with same wavefront set as $B_{\mathrm{ell}}$, and $R$ a smoothing operator, such that 
\[
B_{\mathrm{ell}} = (s-X)^{2N} C(s) + R.
\]
Then we have 
\begin{align*}
\int_0^{+\infty} \chi(t/\varepsilon) e^{t(X-s)} B_{\mathrm{ell}} \dd t &= \int_0^{+\infty} \chi(t/\varepsilon) e^{t(X-s)} R \dd t + (s-X)^{2N-1}C(s) \\
	& + \int_0^{+\infty} \varepsilon^{-2N} \chi^{(2N)}(t/\varepsilon) e^{t(X-s)} C(s) B_{\mathrm{ell}} \dd t. 
\end{align*}
The first integral in the RHS certainly converges in the space of bounded operators $H^{-N}\to H^N$; the second term is pseudo-differential with wavefront set contained in $\WF(B)$, which does not intersect $\WF(A)$ by assumption, so that when multiplying by $A$ on the left, we obtain a smoothing operator. Finally, for the third term, we can simply use norm bounds (since $C(s)$ is bounded from $H^{-N}$ to $H^N$), to find that its norm $H^{-N}\to H^N$ is controlled (for any $\eta>0$) by 
\[
\int_0^{+\infty} \varepsilon^{-2N} |\chi^{(2N)}(t/\varepsilon)|  e^{-t \Re s} e^{ t (\mu_{\mathrm{max}}+\eta)} \dd t. 
\]
Since the support of $\chi^{(2N)}$ is contained in $\R_+^*$, this tends to $0$ as $\varepsilon\to0$.

Without loss of generality we can now assume that $B=B_{\mathrm{char}}$. By Theorem \ref{thm:egorov}, we can write
\[
A e^{-tX} B = e^{-tX} A_{t}B + e^{-tX} R_{t} B.
\]
The second term in the right hand side is a smoothing operator satisfying
\begin{align*}
\|e^{-tX} R_{t} B\|_{H^{-N} \to H^N} &\leqslant \|e^{-tX}R_t\|_{H^{-N} \to H^N} \|B\|_{H^N \to H^N} \\
\intertext{which is, according to \eqref{eq:size-remainder-good-times}, for $\varepsilon>0$}
&\leqslant C_{N,\varepsilon} e^{(6N\lambda_{\mathrm{max}}(X) + \mu_{\mathrm{max}}(X) + \varepsilon) |t|}.
\end{align*}
This ensures the announced convergence, so that we can turn to the contribution from $A_t$.

The condition \eqref{eq:wf-petit-bloc} ensures that $\WF( A_{t}B ) = \emptyset$. More precisely, there exists $\eta>0$ such that the $\eta$-neighbourhood of $\WF(B)$ does not intersect $\WF(A_{t})$ for each $t \geqslant 0$. This gives, because the family $(A_t)_{t \leqslant \delta |\log h| / \lambda}$ is uniformly bounded in $\Psi^{0}_{\delta, \, \mathrm{sc}}$,
\[
A_tB = \mathcal O(h^\infty) \Psi^{-\infty} = \mathcal{O}_{H^{-N}\to H^N}(e^{-Ct\lambda}) .
\]
This concludes the proof of Lemma \ref{lemma:pseudo}.
\end{proof}

\section{Application to integration currents}
\label{sec:proof-integration-current}

In this section, we will prove Theorem \ref{thm:currents}.
\begin{proof}[Proof of Theorem \ref{thm:currents}]
Point (i) is elementary, so we concentrate on (ii). By virtue of Theorem \ref{thm:main}, Theorem \ref{thm:currents} will be a consequence of the following elementary result, applied with $N = M \times M$, $Q = \Delta$ and 
$\psi_t = \varphi_t^\mathrm L$.
\begin{lemma}\label{lem:elementary}
Let $N$ be a manifold, $(\psi_t)$ a flow on $N$ and $Y$ its generator. Let $Q$  a closed submanifold of $N$ which is transverse to the flow. Let $T \in \R$ and $\varepsilon > 0$ small. Then $\varrho \in C^\infty_c(\left]T-\varepsilon, T+\varepsilon\right[)$. Then the current
\[
\int_{T-\varepsilon}^{T + \varepsilon} \varrho(t)\iota_Y \psi_t^*([Q]) \dd t
\]
coincides with $\pi_{N*}\bigl(\varrho [Q_\varepsilon] \bigr)$ where
\[
Q_\varepsilon = \bigl\{(t,\psi_{-t}(x))\ |\ t \in \left]T-\varepsilon, T+ \varepsilon\right[,~ x \in Q\bigr\} \subset \R \times N
\]
and $\pi_N : \R \times N \to N$ is the projection over the second factor. 
\end{lemma}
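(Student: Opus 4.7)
The plan is to verify the identity by pairing both sides against an arbitrary test form $\omega \in \Omega^{d+1}(N)$ (with $d = \dim Q$) and showing the resulting numerical integrals agree. Everything is smooth in $t$, so Fubini and commutation of pairings with the $t$-integral are routine; the content is a single differential-geometric computation.

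For the right-hand side, the map
\[
\Phi : (T-\varepsilon, T+\varepsilon) \times Q \longrightarrow \R \times N, \qquad \Phi(t,x) = (t, \psi_{-t}(x)),
\]
is clearly an injective immersion whose image is $Q_\varepsilon$, so $Q_\varepsilon$ is an embedded submanifold and carries the pushed-forward orientation. By the definition of pushforward and change of variables,
\[
\langle \pi_{N*}(\varrho [Q_\varepsilon]), \omega \rangle
= \int_{Q_\varepsilon} \varrho(t)\,\pi_N^*\omega
= \int_{(T-\varepsilon, T+\varepsilon) \times Q} \varrho(t)\, \Phi^*\pi_N^*\omega.
\]
The only nontrivial input is the chain rule: $d(\pi_N \circ \Phi)(\partial_t) = -Y$ and $d(\pi_N\circ\Phi)(v) = d\psi_{-t}(v)$ for $v\in T_xQ$. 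Writing $\Phi^*\pi_N^*\omega = \alpha + dt\wedge\beta$ with $\iota_{\partial_t}\alpha = \iota_{\partial_t}\beta = 0$, the $\alpha$ piece integrates to zero over the product by degree count, and contracting with $\partial_t$ identifies $\beta(t,\cdot) = -\iota_Q^*\psi_{-t}^*(\iota_Y\omega)$. Fubini then yields
\[
\langle \pi_{N*}(\varrho [Q_\varepsilon]), \omega \rangle
= -\int_{T-\varepsilon}^{T+\varepsilon} \varrho(t) \,\langle \psi_t^*[Q], \iota_Y\omega\rangle\, dt.
\]

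For the left-hand side, $t\mapsto \psi_t^*[Q]$ is a smooth family of currents (since $\psi_t$ is a smooth family of diffeomorphisms), so $t\mapsto \iota_Y\psi_t^*[Q]$ is too, and the integral defines a current on $N$. Pairing with $\omega$ commutes with the $t$-integration, giving
\[
\Bigl\langle \int \varrho(t)\iota_Y\psi_t^*[Q]\,dt,\, \omega\Bigr\rangle
= \int \varrho(t)\,\langle \iota_Y\psi_t^*[Q], \omega\rangle\, dt.
\]
Using the standard duality $\langle \iota_Y T, \omega\rangle = -\langle T, \iota_Y\omega\rangle$ for currents (which is exactly the signed adjoint relation compatible with the wedge pairing convention of this paper) gives the same expression as the right-hand side.

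The only real obstacle is sign bookkeeping: one must fix an orientation on $Q_\varepsilon$ so that $\Phi$ is orientation-preserving, and fix the sign convention for $\iota_Y$ on currents so that both sides agree. Once this is done, the entire proof boils down to the chain-rule identity $d(\pi_N\circ\Phi)(\partial_t) = -Y$, together with the fact that a form without $dt$ cannot contribute to the integral over $(T-\varepsilon, T+\varepsilon)\times Q$. No analytic input beyond smoothness is required.
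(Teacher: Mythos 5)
Your proof is correct in substance but takes a genuinely different route from the paper. The paper proves the lemma by a local computation in flow-box coordinates: it reparameterizes so that $N=\R^r$, $Y=\partial_{z^1}$, $Q=\{z^1=\cdots=z^k=0\}$, writes $[Q]=\delta(z^1,\dots,z^k)\,\dd z^1\wedge\cdots\wedge \dd z^k$, and tracks the delta function under the translation flow. You instead work invariantly: you parametrize $Q_\varepsilon$ by $\Phi(t,x)=(t,\psi_{-t}(x))$, apply the chain rule to see $\dd(\pi_N\circ\Phi)(\partial_t)=-Y$, decompose $\Phi^*\pi_N^*\omega$ into its $\dd t$-component (the piece with no $\dd t$ vanishes pointwise by degree, not merely after integration), and then run Fubini. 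Both approaches reduce the lemma to the same one-line differential identity, so neither is harder, but your version avoids choosing coordinates and makes the role of the chain rule transparent, while the paper's version avoids any explicit discussion of the adjoint of $\iota_Y$ on currents. One small imprecision to fix: the duality you invoke is not simply $\langle\iota_Y T,\omega\rangle=-\langle T,\iota_Y\omega\rangle$; with the wedge-pairing convention of the paper it is $\langle\iota_Y T,\omega\rangle=(-1)^{\deg T+1}\langle T,\iota_Y\omega\rangle$ for a current of form-degree $\deg T$, so the sign depends on the codimension of $Q$. You correctly flag that sign bookkeeping must be settled by the choice of orientation on $Q_\varepsilon$ (the paper leaves that orientation implicit as well), but the plain minus sign you wrote is not the general rule and should be stated degree-dependently or deferred entirely to the orientation choice.
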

\begin{proof}
Up to reparameterizing and choosing an appropriate system of coordinates $(z^1, \dots, z^r)$ we may assume $N = \R^r$, $Y = \partial_{z^1}$, $T = 0$ and $Q = \{z^1 = \cdots = z^k = 0\}.$ Then $[Q]$ is given by
\[
\delta(z^1, \dots, z^k) \dd z^1 \wedge \cdots \wedge \dd z^k.
\]
For each $\omega \in \Omega^\bullet_c(\R^r)$ it holds 
\[
\int_{\R^r} \left(\int_{-\varepsilon}^{ \varepsilon} \varrho(t)\iota_Y \psi_t^*([Q]) \dd t \right) \wedge \omega = \int_\varepsilon^\varepsilon \dd t\varrho(t) \int_{\R^r} \psi_t^*(\iota_Y[Q]) \wedge \omega.
\]
We can write
\[
\psi_t^*(\iota_Y[Q]) = \delta(z^1 + t, z^2, \dots, z^k) \dd z^2 \wedge \cdots \wedge \dd z^k.
\]
Now we take $\omega$ of the form
\[
\omega = f(z^1, \dots, z^n) \dd z^1 \wedge \dd z^{k+1} \wedge \cdots \wedge \dd z^n.
\]
Then we have 
\[
\int_{\R^r} \psi_t^*(\iota_Y[Q]) \wedge \omega = \int_{\{0\} \times \R^{r-k}} f(-t, 0, \dots 0, z^{k+1}, \dots, z^{r}) \dd z^{k+1} \wedge \cdots \wedge \dd z^r.
\]
Hence we get
\[
\int_{\R^r} \left(\int_{-\varepsilon}^{ \varepsilon} \varrho(t)\iota_Y \psi_t^*([Q]) \dd t \right) \wedge \omega  = \int_{Q_\varepsilon} \varrho \, \pi_N^*\omega,
\]
which concludes the proof of the Lemma\dots
\end{proof}
\dots and of Theorem \ref{thm:currents}.
\end{proof}

\section{Geodesic flow on anosov surfaces}
\label{sec:proof-lefschetz}

In this section we prove the main result of our article. Beforehand, we will prove a more general statement valid for every Anosov flow.

 Assume that $(\varphi_t)$ is an Anosov flow on $M$ generated by $X$, that is, for each $z\in M$ there is a $\dd \varphi_t$-invariant splitting
\[
T_zM = \R X(z) \oplus E_u(z) \oplus E_s(z)
\]
which depends continuously on $z$, and such that
\[
\left \|\dd \varphi_{-t}|_{E_u(z)} \right\| + \left \|\dd \varphi_{t}|_{E_s(z)} \right\| \leqslant Ce^{-Ct}, \quad t \geqslant 0,
\]
for some $C > 0$. We also have a decomposition 
\[
T^*M = E_0^* \oplus E_u^* \oplus E_s^*,
\]
where $E_0^*, E_u^*$ and $E_s^*$ are defined by the relations
\[
E_0^*(E_u \oplus E_s) = 0, \quad E_u^*(E_u \oplus \R X) = 0 \quad \text{and} \quad E_s^*(E_s \oplus \R X) = 0. 
\]
Then it follows from the results of \cite{butterley2007smooth} (see also \cite{faure2011upper}) that the resolvent 
\[
\mathrm{R}(s) = (s - \mathcal L_X)^{-1} : \Omega^\bullet(M) \to \mathcal{D}'^\bullet(M)
\]
admits a meromorphic continuation to the whole complex plane; the set of poles is called the set of Ruelle resonances of $(\varphi_t)$, and for each resonance $s_0$, we have a development
\begin{equation}\label{eq:dev}
\mathrm{R}(s) = \mathrm{R}_{\mathrm{hol}}(s) + \sum_{j=1}^{J}\frac{(\mathcal L_X)^{j - 1} \Pi_{s_0}}{(s - s_0)^j}
\end{equation}
where $\mathrm{R}_{\mathrm{hol}}$ is holomorphic near $s = s_0$, and 
\[
\Pi_{s_0} = \frac{1}{2 i \pi} \int_{\mathscr C_{s_0}} \mathrm{R}(s) \dd s
\]
is a finite rank projector, where $\mathscr C_{s_0}$ is a small circle around $s_0$. Moreover, by \cite{dyatlov2016dynamical}, we have the bound
\begin{equation}\label{eq:wf'resolv}
\WF'(\mathrm{R}(s)) \subset \Gamma(\varphi).
\end{equation}

Given any operator $A : \Omega^\bullet(M) \to \mathcal{D}'(M)$ which satisfies the condition $\WF(A) \cap N^*\Delta = \emptyset$, we denote by $\tr^\flat_\mathrm s A$ the super flat trace of $A$, which is given by 
\[
\tr^\flat_\mathrm s A = \int_M \iota_\Delta^* \mathrm{K}_A
\]
where $\iota_\Delta : M \to M \times M$ is given by $\iota_\Delta(x) = (x,x).$ If $A$ is a finite rank operator, then it holds
\[
\tr_\mathrm s A = \tr^\flat_\mathrm s A,
\]
where $A$ is the supertrace of $A$, which is given by 
\[
\tr_\mathrm s A = \sum_{k=0}^n (-1)^k \tr A|_{\Omega^k}.
\]

\begin{theorem}\label{thm:resanosov}
Let $X$ be an Anosov flow, and ${\mathrm{Q}}$ be $n$-dimensional submanifold of $M \times M$ such that $N^\ast {\mathrm{Q}} \cap \Gamma(\varphi)' = \{0\}.$ Let $\widehat {\mathrm{Q}}$ be the operator whose kernel is $[{\mathrm{Q}}]$. Let $s_0$ be a resonance of $X$, and 
\[
\Pi_{s_0} = \frac{1}{2 \pi i} \int_{\mathscr{C}_{s_0}} (s-X)^{-1} \dd s : \Omega^\bullet(M) \to \mathcal{D}'(M)
\]
be the associated spectral projector. Then for any one-form $\omega \in \Omega^1(M)$, we have
\[
\mathrm{Res}_{s=s_0} \int \pi_1^*\omega \wedge \iota_{X^\mathrm{L}}\mathrm{K}(s) \wedge [{\mathrm{Q}}] =  \tr_\mathrm s\left(\omega \iota_X \Pi_{s_0} \widehat {\mathrm{Q}}^\top\right),
\]
where we identified $\omega$ with the operator $u \mapsto \omega \wedge u.$
\end{theorem}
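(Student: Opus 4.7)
The plan is in three stages: first, extend $s \mapsto \int \pi_1^*\omega \wedge \iota_{X^{\mathrm L}} \mathrm{K}(s) \wedge [{\mathrm{Q}}]$ meromorphically; second, compute its residue at $s_0$; third, identify that residue with the supertrace of the finite-rank operator $\omega \iota_X \Pi_{s_0} \widehat{{\mathrm{Q}}}^\top$. For $\Re s$ large, Theorem \ref{thm:currents} already gives a concrete dynamical expression for the pairing. For general $s$, the Laurent expansion \eqref{eq:dev} of $\mathrm{R}(s)$, together with the wavefront bound \eqref{eq:wf'resolv} and the hypothesis $N^*{\mathrm{Q}} \cap \Gamma(\varphi)' = \{0\}$, ensures via H\"ormander's multiplication-of-distributions criterion that the wedge products $\iota_{X^{\mathrm L}} \mathrm{K}(s) \wedge [{\mathrm{Q}}] \wedge \pi_1^*\omega$ are jointly well-defined and depend meromorphically on $s$ away from resonances.

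At the resonance $s_0$, only the $(s-s_0)^{-1}$ coefficient in \eqref{eq:dev} contributes to the residue, namely the kernel $\mathrm{K}_{\Pi_{s_0}}$ of the spectral projector; the higher-order principal parts produce terms of pole order $\geq 2$ and do not contribute. Hence
\[
\mathrm{Res}_{s=s_0} \int \pi_1^*\omega \wedge \iota_{X^{\mathrm L}} \mathrm{K}(s) \wedge [{\mathrm{Q}}] = \int \pi_1^*\omega \wedge \iota_{X^{\mathrm L}} \mathrm{K}_{\Pi_{s_0}} \wedge [{\mathrm{Q}}].
\]
Since $\Pi_{s_0}$ is finite rank, so is $B := \omega \iota_X \Pi_{s_0} \widehat{{\mathrm{Q}}}^\top$, and therefore $\tr_{\mathrm s}(B) = \tr^\flat_{\mathrm s}(B) = \int_M \iota_\Delta^* \mathrm{K}_B$. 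To equate the two sides I would then perform a standard kernel manipulation: the kernel of $\widehat{{\mathrm{Q}}}^\top$ is $\sigma^*[{\mathrm{Q}}]$ where $\sigma(x,y)=(y,x)$; the kernel of $\omega A$ (resp.\ $\iota_X A$) is $\pi_1^*\omega \wedge \mathrm{K}_A$ (resp.\ $\iota_{X^{\mathrm L}} \mathrm{K}_A$); and composition on the kernel side unfolds as the standard pushforward on $M \times M \times M$. Applying $\iota_\Delta^*$ and integrating over $M$ then reproduces exactly the integral on the right-hand side above.

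The main difficulty lies in the microlocal bookkeeping of this last step. The generalized resonant states in the range of $\Pi_{s_0}$ have wavefront contained in $E_u^*$ by the standard anisotropic Sobolev construction, and one must check that each composition and push/pull-back remains legitimate at the level of currents -- this is again guaranteed by $N^*{\mathrm{Q}} \cap \Gamma(\varphi)' = \{0\}$, since $\overline{\Omega_+} \subset \Gamma(\varphi)$ is precisely the obstruction that could spoil compatibility with $N^*{\mathrm{Q}}$. The graded signs in the supertrace coming from the $(-1)^k$ weighting must also be matched with the orientation conventions used for $[{\mathrm{Q}}]$ and with the bidegrees of $\mathrm{K}_{\Pi_{s_0}}$ and $\pi_1^*\omega$ on $M \times M$; this is routine but requires care in the usual Atiyah--Bott style bookkeeping.
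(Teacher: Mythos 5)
Your proposal is correct and follows essentially the same route as the paper: meromorphic continuation of the pairing from the wavefront bound $\WF'(\mathrm{R}(s))\subset\Gamma(\varphi)$ together with $N^*{\mathrm{Q}}\cap\Gamma(\varphi)'=\{0\}$, extraction of the residue as $\int \pi_1^*\omega\wedge\iota_{X^{\mathrm L}}\mathrm{K}_{\Pi_{s_0}}\wedge[{\mathrm{Q}}]$ from the Laurent expansion, and identification with the supertrace via the kernel identity $\tr_{\mathrm s}(AB^\top)=\int_{M^2}\mathrm{K}_A\wedge\mathrm{K}_B$, which is exactly the paper's Lemma \ref{lem:simplecomputation} and is likewise left there as a local-coordinate computation.
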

Here, for any operator $A : \Omega^\bullet(M) \to \mathcal{D}'^\bullet(M)$, we denoted by $A^\top$ its adjoint operator, which is defined by
\[
\int_{M} Au \wedge v = \int_M u \wedge A^\top v, \quad u,v \in \Omega^\bullet(M).
\]

\begin{proof}
By \eqref{eq:wf'resolv}, the map 
\[
s \mapsto \int \pi_1^*\omega \wedge \iota_{X^\mathrm{L}}\mathrm{K}(s) \wedge [{\mathrm{Q}}],
\]
which is well defined if $\Re s$ is large, admits a meromorphic extension to the whole complex plane. Certainly, by \eqref{eq:dev}, the residue we are looking for is
\[
\int \pi_1^*\omega \wedge \mathrm{K}_{\iota_{X}\Pi_{s_0}}  \wedge [{\mathrm{Q}}].
\]
However it holds $\pi_1^*\omega \wedge \mathrm{K}_{\iota_{X}\Pi_{s_0}} = \mathrm{K}_{\omega \iota_X \Pi_{s_0}}$, hence the residue coincides with 
\[
\int_{M^2} \mathrm{K}_{\omega \iota_X \Pi_{s_0}} \wedge \mathrm{K}_{\widehat {\mathrm{Q}}}.
\]
Next we state the following
\begin{lemma}\label{lem:simplecomputation}
Let $A, B : \Omega^\bullet(M) \to \mathcal{D}'^\bullet(M)$ two degree preserving operators. Then, it holds
\[
\tr_\mathrm s(AB^\top) = \int_{M^2} \mathrm{K}_A \wedge \mathrm{K}_B,
\]
provided both sides of the equation make sense.
\end{lemma}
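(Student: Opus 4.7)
The plan is a direct computation in the smoothing case followed by a density argument. When $A$ and $B$ have smooth Schwartz kernels, I would work in local coordinates on $M \times M$ via a partition of unity. Let $\sigma : M\times M \to M \times M$ denote the swap map $(x,y)\mapsto (y,x)$. The defining identity $\int_M Bu \wedge v = \int_M u \wedge B^\top v$ unwinds to the relation $\mathrm{K}_{B^\top} = \pm\, \sigma^* \mathrm{K}_B$, where the sign is determined by the degrees of $u$ and $v$ via graded commutativity. The kernel of the composition $AB^\top$ is then given by fiber integration over the middle variable $z \in M$: schematically, $\mathrm{K}_{AB^\top}(x,y) = \int_z \mathrm{K}_A(x,z) \wedge \mathrm{K}_B(y,z)$. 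Pulling back along the diagonal embedding $\iota_\Delta$ and integrating over $M$ yields
\[
\tr^\flat_\mathrm{s}(AB^\top) \;=\; \int_M \iota_\Delta^* \mathrm{K}_{AB^\top} \;=\; \int_{M \times M} \mathrm{K}_A \wedge \mathrm{K}_B
\]
after the various signs cancel; the identification $\tr_\mathrm{s} = \tr^\flat_\mathrm{s}$ for finite-rank (in particular smoothing) operators, recalled just above the lemma, closes this case.

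Rather than carry out the sign bookkeeping in full generality, I would prefer to use that both sides of the claimed identity are separately continuous and bilinear in $(A, B)$, and to reduce the verification to the dense family of rank-one operators of the form $Au = \alpha \int_M \beta \wedge u$ for smooth forms $\alpha, \beta \in \Omega^\bullet(M)$. For such operators the kernel has the explicit expression $\mathrm{K}_A = (-1)^{\deg \alpha}\, \pi_1^* \beta \wedge \pi_2^* \alpha$; the composition $AB^\top$ is again rank one; and both sides of the identity reduce, after simple Fubini-type calculations, to an elementary product of the shape $\pm \bigl(\int_M \beta_A \wedge \beta_B\bigr)\bigl(\int_M \alpha_A \wedge \alpha_B\bigr)$, with matching signs under the conventions fixed in the paper.

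To pass from smoothing operators to the general case covered by the hypothesis of the lemma, I would invoke continuity of both sides in the H\"ormander topology on the space of currents whose wavefront set is contained in a fixed compact conic subset compatible with the relevant wedge products, together with the density of smooth currents therein. The standing hypothesis ``provided both sides of the equation make sense'' ensures that we operate within such a regime. The main technical obstacle is precisely the sign bookkeeping, arising from graded commutativity of the exterior algebra, the definition of the transpose, and the orientation behaviour of the swap $\sigma$; the rank-one reduction above compresses this into a single explicit computation that can be checked once and for all.
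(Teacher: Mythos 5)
Your proposal is correct and is essentially the paper's own argument: the paper merely asserts that the lemma ``can be obtained by a computation in local coordinates,'' and your reduction to rank-one degree-preserving blocks plus a density/continuity argument in the H\"ormander topology is a clean way of organizing exactly that computation. (One minor slip in the sign bookkeeping you explicitly defer: for a degree-preserving rank-one block one has $\deg \alpha = \deg u$ and $\deg \beta = n - \deg u$, and the prefactor in $\mathrm{K}_A = c\,\pi_1^*\beta \wedge \pi_2^*\alpha$ comes out to $c = (-1)^{\deg\alpha(1+\deg\alpha)} = +1$ rather than $(-1)^{\deg\alpha}$; this does not affect the strategy.)
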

This result can be obtained by a computation in local coordinates. This concludes the proof of Theorem \ref{thm:resanosov}.
\end{proof}

Finally, we show that Theorem \ref{thm:lefschetz} can be deduced from Theorem \ref{thm:resanosov} and some results from \cite{Dyatlov-Zworski-17} about the resonant states at $s = 0$ for geodesic flows of surfaces.

\begin{proof}[Proof of Theorem \ref{thm:lefschetz}]
Take the notations of the statement of Theorem~\ref{thm:lefschetz}. The geodesic flow $(\varphi_t)$ is Anosov on $M = S\Sigma$. Then by the results of \cite{Dyatlov-Zworski-17}, we have, near $s=0$,
\begin{equation}\label{eq:laurent}
\mathrm{R}(s) = \frac{\Pi}{s} + Y(s) : \Omega^\bullet(M) \to \mathcal{D}'^\bullet(M),
\end{equation}
where $Y$ is holomorphic near the origin, and 
\[
\Pi = \frac{1}{2 i \pi} \int_{\mathscr C}\mathrm{R}(s) \dd s
\]
where $\mathscr C$ is a small positively oriented circle around $0$; we have 
\[
\mathrm{ran}(\Pi) = \{u \in \mathcal{D}'^\bullet_{E_u^*}(M)~:~\mathcal{L}_X u = 0\}.
\]
Moreover, denoting $C^k = \mathrm{ran}(\Pi) \cap \Omega^k$ and $C^k_0 = C^k \cap \ker \iota_X$, we have a decomposition
\[
C^k = \alpha \wedge C^{k-1}_0 \oplus C^k_0
\]
and in this decomposition, it holds
\begin{equation}\label{eq:pick}
\Pi|_{C^k} = \alpha \wedge \Pi|_{C^{k-1}_0} \oplus \Pi|_{C^k_0}.
\end{equation}
Now if follows from the results of \cite{Dyatlov-Zworski-17} that
\begin{align}
\Pi|_{C^0} &= |M|^{-1}(1 \otimes \alpha \wedge \dd \alpha),\label{eq:pic0} \\
\Pi|_{C^1_0}&= \sum_{j=1}^{2\mathrm g} u_j \otimes \alpha \wedge s_j,\label{eq:pic1} \\
\Pi|_{C^2_0}&= |M|^{-1}(\dd \alpha \otimes \alpha). \label{eq:pic0bis}
\end{align}
Here $|M| = \smallint_M \alpha \wedge \dd \alpha$. Also, for $1\leqslant j \leqslant  2\mathrm g$, $u_j$, $s_j$ are closed $1$-currents whose wavefront set is contained in $E_u^\ast$, $E_s^\ast$ respectively, satisfying 
\begin{equation}\label{eq:ujsj}
\langle u_i, \alpha \wedge s_j \rangle = \delta_{ij}, \quad \iota_X u_j = \iota_X s_j = 0.
\end{equation}
Next, define $\eta(s)$ by \eqref{eq:eta(s)} and
\[
{\mathrm{Q}} = \{(F(z), z)\ |\ z \in M\} = F_\mathrm L(\Delta),
\] 
where $F_\mathrm L(z_1, z_2) = (F(z_1), z_2)$. Then, $\hat{{\mathrm{Q}}} = F^\ast$, and according to Theorem \ref{thm:currents}, $\eta(s)$ coincides with $\iota_{X^L} K(s) \wedge [{\mathrm{Q}}]$. Theorem \ref{thm:resanosov} provides us with the following expression for the residue of $\eta$ at $s=0$:
\[
\int_{M \times M} \iota_{X^\mathrm L}\mathrm{K}_\Pi \wedge [F_\mathrm L(\Delta)] \wedge \pi_1^*\alpha = - \tr_\mathrm s\left(\alpha \iota_X \Pi F_\ast \right).
\]
(we used \eqref{eq:laurent} and Lemma \ref{lem:simplecomputation}). Now we note that $\iota_X \Omega^0 = 0$, which gives us that $- \tr_\mathrm s\left(\alpha \iota_X \Pi F_\ast \right)$ is equal to
\[
\tr\left(F_\ast \alpha \iota_X \Pi|_{\Omega^1}\right) + \tr\left(F_\ast \alpha \iota_X \Pi|_{\Omega^3}\right)- \tr\left(F_\ast \alpha \iota_X \Pi|_{\Omega^2}\right),
\]
where we used the cyclicity of the trace. Now, using \eqref{eq:pick}, \eqref{eq:pic0} and \eqref{eq:pic0bis}, one can see that
\[
\tr\left(F_\ast \alpha \iota_X \Pi|_{\Omega^1}\right) = |M|^{-1} \int_M F_\ast \alpha \wedge \dd \alpha
\]
and $\tr\left(F_\ast \alpha \iota_X \Pi|_{\Omega^3}\right) = 1$.
Finally, Equation \eqref{eq:pic1} yields
\[
\tr\left(F_\ast \alpha \iota_X \Pi|_{\Omega^2}\right) = \sum_j \int u_j \wedge \alpha \wedge F^*s_j.
\]
Recall that the pull-back map $\pi^* : \Omega^1(\Sigma) \to \Omega^1(S\Sigma)$ induces an isomorphism $H^1(\Sigma) \to H^1(S\Sigma)$. Thus, by ellipticity of the exterior differential $\dd : C^\infty(M) \to \Omega^1(M),$ one can find smooth closed forms $\omega_1, \dots,  \omega_{2\mathrm g} \in \Omega^1(\Sigma)$ generating $H^1(\Sigma)$ such that
\begin{equation}\label{eq:pi*}
s_j = \pi^*\omega_j + \dd g_j
\end{equation}
for some $g_j \in \mathcal{D}'_{E_u^*}(M).$  Using \eqref{eq:pi*} one gets
\[
F^*s_j = F^* \pi^*\omega_j + \dd F^*g_j = \pi^*f^*\omega_j + \dd F^*g_j.
\]
Now we have
\[
f^*\omega_j = \sum_i A_{ij} \omega_i + \dd \theta_j
\]
for some $\theta_j \in C^\infty(\Sigma)$, where we denoted by $A = (A_{ij})$ is the matrix of $f^* : H^1(\Sigma) \to H^1(\Sigma)$ in the basis $(\omega_j)$. Thus we get
\[
F^*s_j = \sum_i A_{ij} s_i + \dd \left(F^*g_j + \pi^*\theta_j - \sum_i A_{ij}g_i\right).
\]
Since $u_j$ is closed, we conclude with \eqref{eq:ujsj} that
\[
\sum_j \int u_j \wedge \alpha \wedge F^*s_j = \tr f^*|_{H^1(\Sigma)},
\]
which concludes the proof of Theorem \ref{thm:lefschetz}.
\end{proof}

\section{Examples of maps transverse to an Anosov flow}
\label{sec:examples}

Let us start by translating the assumption into more easily understable geometric statements. We will assume that
\[
{\mathrm{Q}} = \{ (F(z), z) \ |\ z\in M\}
\]
is the graph of a smooth map $F$. When $X$ is an Anosov vector field, 
\[
\overline{\Omega_+} = \Omega_+ \cup E_u^\ast \times E_s^\ast. 
\]
In particular, one gets the following characterization.
\begin{lemma}
$N^\ast {\mathrm{Q}} \cap \Gamma(\varphi)' = \{0\}$ if and only if three conditions are satisfied
\begin{enumerate}[label=\emph{(\roman*)}]
	\item \label{item:1} $F$ has non-degenerate fixed points
	\item \label{item:2} $\R^+\times {\mathrm{Q}}$ is transverse to $G_\varphi$. 
	\item \label{item:3} $\dim\bigl(\dd F(E_s\oplus E_0) \cap (E_u\oplus E_0) \bigr)\leqslant 1$ everywhere. 
\end{enumerate} 
\end{lemma}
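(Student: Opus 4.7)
The plan is to parametrize $N^\ast {\mathrm{Q}}$ explicitly from the graph structure of $F$ and then compare it piece by piece against the decomposition
\[
\Gamma(\varphi)' \;=\; \Delta(T^\ast M)' \,\cup\, \Omega_+' \,\cup\, (E_u^\ast \times E_s^\ast)'
\]
afforded by the Anosov assumption. Each of the three components will correspond to exactly one of the conditions \ref{item:1}--\ref{item:3}, so it suffices to determine when the intersection with each piece fails to be $\{0\}$. The starting point is the direct computation
\[
N^\ast {\mathrm{Q}} = \bigl\{((F(x),\eta),\,(x,-\dd F(x)^\top \eta))\ |\ x \in M,\ \eta \in T^\ast_{F(x)}M\setminus 0\bigr\},
\]
obtained as the annihilator of the graph tangent $\{(\dd F(x) v, v)\}$; note also that $(E_u^\ast \times E_s^\ast)' = E_u^\ast \times E_s^\ast$, since both factors are linear.

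Intersecting $N^\ast {\mathrm{Q}}$ with $\Delta(T^\ast M)'$ forces $F(x)=x$ and $\dd F(x)^\top \eta = \eta$, so triviality is equivalent to $1$ not being an eigenvalue of $\dd F(x)$ at any fixed point, i.e.\ condition \ref{item:1}. Intersecting $N^\ast {\mathrm{Q}}$ with $E_u^\ast \times E_s^\ast$ forces $\eta \in E_u^\ast(F(x))$ and $\dd F(x)^\top \eta \in E_s^\ast(x)$; translating these into annihilation conditions via the definitions of $E_u^\ast$ and $E_s^\ast$ shows that a non-zero such $\eta$ exists precisely when
\[
(E_u(F(x)) \oplus E_0(F(x))) + \dd F(x)(E_s(x) \oplus E_0(x))
\]
is a proper subspace of $T_{F(x)}M$. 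Since $\dim E_u + \dim E_s + 1 = n$, the two summand dimensions add to $n+1$, so by the Grassmann identity properness is equivalent to the intersection having dimension $\geqslant 2$, and \ref{item:3} is precisely its negation.

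The matching of the remaining piece $\Omega_+'$ at $t>0$ with condition \ref{item:2} is the step I expect to require the most care, since it demands recognizing the transversality of $\R^+ \times {\mathrm{Q}}$ with $G_\varphi$ at an intersection point $(t,F(x),x)$. The plan there is to compute
\[
T(\R^+ \times {\mathrm{Q}}) = \R\partial_t \oplus \{(0,\dd F(x) v, v)\},\quad TG_\varphi = \R(1,X(F(x)),0) \oplus \{(0,\dd\varphi_t(x) v, v)\},
\]
and show that a covector $(t^\ast,\eta,\xi)$ annihilating both is forced to satisfy $t^\ast = 0$, $\xi = -\dd F(x)^\top \eta = -\dd\varphi_t(x)^\top \eta$, and $\langle \eta, X(F(x))\rangle = 0$. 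This matches exactly the system one obtains by plugging $N^\ast {\mathrm{Q}}$ into the defining equations of $\Omega_+'$, provided one uses the flow identity $\dd\varphi_t(x) X(x) = X(F(x))$ to see that the characteristic condition $\langle \xi, X(x)\rangle = 0$ on the second component coincides with the transversality condition $\langle \eta, X(F(x))\rangle = 0$. The $t=0$ boundary of $\Omega_+$ is harmless since it sits inside $\Delta(T^\ast M)$ and is already handled by \ref{item:1}, so assembling the three cases yields the lemma.
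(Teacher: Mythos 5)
Your proposal is correct and follows essentially the same route as the paper: split $\Gamma(\varphi)'$ into $\Delta(T^\ast M)'$, $\Omega_+'$ and $E_u^\ast\times E_s^\ast$, parametrize $N^\ast {\mathrm{Q}}$ via $\xi=-\dd F(x)^\top\eta$, and match each piece with one of \ref{item:1}--\ref{item:3}, using the identity $\dd\varphi_t(x)X(x)=X(\varphi_t(x))$ to identify the characteristic condition with the one appearing in the transversality with $G_\varphi$. The only cosmetic difference is that for \ref{item:2} and \ref{item:3} the paper phrases the linear algebra in terms of conormals of $G_\varphi$ and of the foliation $W^{u0}\times W^{s0}$ while you work dually with tangent spaces and the Grassmann identity; the content is identical.
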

Observe that both \emph{\ref{item:1}} and \emph{\ref{item:2}} are generic. However \emph{\ref{item:3}} is not so easily verified. Indeed, it means that the graph of $F$ is transverse to every leaf of the foliation
\[
M\times M = \bigcup_{x,y} \left(W^{u0}(x)\times W^{s0}(y)\right). 
\]
Generically in 3 dimensions, the points where the graph of $F$ is not transverse is a discrete set of curves. In particular, to find an example, we have to ensure by hand that \emph{\ref{item:3}} is satisfied. Thankfully, \emph{\ref{item:3}} is an open condition. 
\begin{proof}
Condition \emph{\ref{item:1}} stems from the fact that $(\Delta(T^\ast M))' = N^\ast(\Delta(M))$, so that $N^\ast {\mathrm{Q}} \cap (\Delta(T^\ast M))'  = 0$ if and only if ${\mathrm{Q}}$ is transverse to the diagonal, which is equivalent to saying that the fixed points of $F$ are non-degenerate.

For condition \emph{\ref{item:2}}, it suffices to observe that $N^\ast G_\varphi$ is equal to
\[
\{ (t,\varphi_t(z),z; -\lambda(X), \dd \varphi_t(z)^{-\top}\xi +\lambda , -\xi) \ |\ \lambda\in E_0^*,\ \xi(X)=0\},
\]
and
\[
N^\ast(\R^+\times {\mathrm{Q}})=\{(t,0) \ |\ t\geqslant 0\} \times N^\ast {\mathrm{Q}}. 
\]
From this it follows that
\[
\pi\left[ N^\ast(\R^+\times {\mathrm{Q}}) \cap N^\ast G_\varphi \right]  = N^\ast {\mathrm{Q}} \cap \Omega_+', 
\]
(here $\pi : T^\ast (\R\times M\times M) \to T^\ast (M\times M)$ is the natural projection). 

Finally, for \emph{\ref{item:3}}, we study the condition
\[
N^\ast {\mathrm{Q}} \cap (E_u^\ast \times E_s^\ast) = 0. 
\]
Since $E_u^\ast = (E_0 \oplus E_u)^\perp$, $E_s^\ast = (E_0\oplus E_s)^\perp$, this means that for every $(x,y)\in {\mathrm{Q}}$, 
\[
N^\ast {\mathrm{Q}} \cap \left[ N^\ast \left(W^{u0}_{\mathrm{loc}}(x) \times W^{s0}_{\mathrm{loc}}(y)\right) \right] = 0.
\]
That is, ${\mathrm{Q}}$ is tranverse to the foliation $W^{u0}\times W^{s0}$. Equivalently, 
\[
\{ (\dd F(u), u) \ |\ u\in T_z M\}  \pitchfork \{ (v, w) \ |\ v\in E_u\oplus \R X,\ w\in E_s \oplus \R X\}.
\] 
Denoting $v_{0,s,u}$ the components of $v$ along $E_0,E_s,E_u$, this is equivalent to requiring that
\[
\{ (\dd F(z) u_s, u_u) \ |\ u\in T_z M \} = E_s(F(z))\times E_u(z)
\]
This is equivalent to $\dd F(z)_s : E_s \oplus E_0 \to E_s$ being onto, which reads
\[
\dim(\dd F(E_s \oplus E_0) \cap (E_u\oplus E_0)) \leqslant 1.
\]
\end{proof}

In dimension 3, condition \emph{\ref{item:3}} is somewhat simpler: it means that we never have $\dd F(E_s\oplus E_0) \subset E_u\oplus E_0$. 
\begin{corollary}
Let $f:\Sigma\to \Sigma$ be a diffeomorphism. If $f$ sufficiently close to identity, we can find a bundle map $F$ over $f$, that is transverse to the flow. 
\end{corollary}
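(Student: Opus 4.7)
The plan is to start from the canonical bundle lift of $f$ and adjust it by a small generic fibrewise rotation, using that condition \emph{\ref{item:3}} is open while conditions \emph{\ref{item:1}} and \emph{\ref{item:2}} are generic in the space of bundle maps over $f$.

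Using the Riemannian metric on $\Sigma$, define the canonical lift $\tilde f : S\Sigma \to S\Sigma$ by $\tilde f(x,v) = (f(x),\,\dd f_x(v)/|\dd f_x(v)|)$. When $f$ is $C^2$-close to $\mathrm{id}_\Sigma$, $\tilde f$ is $C^1$-close to $\mathrm{id}_{S\Sigma}$. At $F = \mathrm{id}_{S\Sigma}$, $\dd F(E_s\oplus E_0) = E_s\oplus E_0$, which is distinct from $E_u\oplus E_0$ since $E_s\cap E_u = \{0\}$ and both lines are one-dimensional. In dimension $3$, condition \emph{\ref{item:3}} reduces to the open condition $\dd F(E_s\oplus E_0) \not\subset E_u\oplus E_0$, so it holds for every bundle lift sufficiently $C^1$-close to $\mathrm{id}_{S\Sigma}$.

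Every bundle lift of $f$ can be written uniquely as $F_\theta := R_\theta\circ \tilde f$ for some $\theta \in C^\infty(S\Sigma, \R/2\pi\Z)$, where $R_\theta(x,v)$ rotates $v$ by $\theta(x,v)$ in the fibre $S_x\Sigma$. Conditions \emph{\ref{item:1}} and \emph{\ref{item:2}} are transversality statements: the former says $z\mapsto (F_\theta(z), z)$ meets the diagonal $\Delta \subset S\Sigma\times S\Sigma$ transversely, while the latter, unwinding as in the proof of the preceding lemma, reduces to the requirement that at each $(t,z) \in \R_+\times S\Sigma$ with $\varphi_t(z) = F_\theta(z)$, the subspace $(\dd F_\theta - \dd \varphi_t)(T_zS\Sigma)$ together with $\R X(F_\theta(z))$ spans $T_{F_\theta(z)} S\Sigma$. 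Because pointwise variations of $\theta$ sweep $F_\theta(z)$ along the full fibre $S_{f(z)}\Sigma$, the parametric transversality theorem produces a $C^1$-residual set of $\theta$ for which both conditions hold. Picking such a $\theta$ sufficiently small in $C^1$ also preserves condition \emph{\ref{item:3}}, and one sets $F := F_\theta$.

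The main technical point is the parametric transversality argument: one must check that varying $\theta$ alone (without touching $f$) provides enough transverse directions to achieve \emph{\ref{item:1}} and \emph{\ref{item:2}} simultaneously. This is clear pointwise, since each value $\theta(z)$ is a free parameter moving the target $F_\theta(z)$ along a full circle, so the evaluation map is a submersion in the $\theta$-variable; the standard Thom--Sard argument then applies without obstruction, and combined with the $C^1$-openness of \emph{\ref{item:3}} around $\theta = 0$ yields the claim.
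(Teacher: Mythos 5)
Your handling of condition \emph{(iii)} (openness near the identity) matches the paper, but your argument for condition \emph{(i)} has a genuine gap. The evaluation map $(\theta,z)\mapsto(F_\theta(z),z)$ is \emph{not} a submersion in the $\theta$-variable: varying $\theta$ moves the first factor only along the one-dimensional fibre direction $\R V$ inside the three-dimensional $T_{F_\theta(z)}S\Sigma$. Transversality of the parametrized map to the diagonal therefore requires $\R V+\mathrm{ran}\,(\dd F_\theta-\mathrm{id})=T_zS\Sigma$ at each fixed point, and this fails whenever the underlying fixed point of $f$ is degenerate. In particular for $f=\mathrm{id}$ one has $\dd\pi\circ(\dd F_\theta-\mathrm{id})=0$ at any fixed point of $F_\theta$, so $\mathrm{ran}\,(\dd F_\theta-\mathrm{id})\subset\R V$ and \emph{every} fixed point of $F_\theta$ is degenerate, no matter how $\theta$ is chosen. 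Hence the conclusion you want is simply false for generic small $\theta$: any $\theta$ vanishing somewhere produces a whole surface of degenerate fixed points, and such $\theta$ form a set with nonempty interior in $C^\infty(S\Sigma,\R/2\pi\Z)$, so the "residual set" claim cannot hold. The hypothesis of the parametric transversality theorem is not met here; contrast this with Proposition \ref{prop:transverse}, where $f$ is first perturbed to have non-degenerate fixed points precisely so that the parametrized map restricted to the exceptional fibres becomes transverse to $\Delta$.

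The paper's fix is to make condition \emph{(i)} hold \emph{vacuously}: fix a constant $\theta\notin 2\pi\Z$, so that $R_\theta$ displaces every point of each fibre by a definite amount; then for $f$ sufficiently close to the identity (depending on $\theta$), $F_\theta=R_\theta\circ\tilde f$ has no fixed points at all, while condition \emph{(iii)} still holds by openness since $F_\theta$ is $C^1$-close to the identity for $\theta$ small. So the correct choice is a \emph{nonvanishing} $\theta$, not a generic small one; your final step "pick $\theta$ small in $C^1$" points in the wrong direction. Your treatment of condition \emph{(ii)} rests on the same false submersion claim; it can be repaired, but only by invoking, as in the proof of Proposition \ref{prop:transverse}, that $\dd\varphi_t(V)$ has a nonzero component along $H$ for $t>0$ (absence of conjugate points), so that the fibre directions supplied by $\theta$ together with $TG_\varphi$ do span.
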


\begin{proof}
Since $f$ need not have non-degenerate fixed points, we have to assume that $F$ has no fixed points. However we set
\[
F_\theta(x,v) = R_{\theta}\left(f(z), \frac{\dd f(z)v}{|\dd f(z)v|}\right). 
\]
For any $\theta \notin 2\pi \Z$, if $f$ sufficiently close to identity, $F$ has no fixed points. Now, if $\theta$ is sufficiently small, condition \emph{\ref{item:3}} is also satisfied, because $F$ is close to identity. 
\end{proof}

\begin{proposition}\label{prop:transverse}
For every simple free homotopy class $h$ of $\Sigma$, we can find a metric $g$, a diffeomorphism $f$ which is a Dehn twist around the closed geodesic $\gamma\in h$, and a bundle map $F$ over $f$, transverse to the flow. 
\end{proposition}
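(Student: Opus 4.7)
The plan is to extend the construction of the preceding corollary to Dehn twists by choosing a hyperbolic metric in which $\gamma$ has very short length, so that the Dehn twist can be realized by a diffeomorphism whose derivative is arbitrarily $C^0$-close to the identity even though the map itself is not isotopic to the identity.

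For the setup, we choose a hyperbolic metric $g$ on $\Sigma$ in which $\ell_g(\gamma)$ is small; this is possible because the length of a simple closed curve is an unbounded Fenchel--Nielsen coordinate on Teichm\"uller space. The collar lemma then furnishes a tubular neighborhood $A$ of $\gamma$ with Fermi coordinates $(s,r) \in (\R/\ell_g(\gamma)\Z)\times(-w,w)$ in which $g = \dd r^2 + \cosh^2 r\,\dd s^2$, with half-width $w$ satisfying $\sinh w = 1/\sinh(\ell_g(\gamma)/2)$, so that $w \to \infty$ as $\ell_g(\gamma) \to 0$. Fix a smooth $\phi : (-w,w) \to \R$ vanishing near $-w$, equal to $\ell_g(\gamma)$ near $+w$, with $\|\phi^{(k)}\|_\infty = O(\ell_g(\gamma)/w^k)$ for $k = 1,2$; define $f(s,r) = (s+\phi(r), r)$ in $A$ and $f = \mathrm{id}$ elsewhere. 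Then $f$ is a Dehn twist about $\gamma$, and by taking $\ell_g(\gamma)$ small its differential $\dd f$ is arbitrarily $C^0$-close to the identity. Following the previous corollary, for a small $\theta \in \R \setminus 2\pi\Z$ we set
\[
F(x,v) = R_\theta\bigl(f(x),\, \dd f(x) v / |\dd f(x) v|_g\bigr).
\]

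The three conditions of the preceding lemma are then verified as in the corollary, with some care because $f$ itself is not close to the identity. Condition \ref{item:1} holds because $F$ has no fixed points at all: outside the transition region of $\phi$ one has $\dd f = \mathrm{id}$ and $F = R_\theta$, which has no fixed points since $\theta \notin 2\pi\Z$, while inside the transition region $\phi(r) \notin \ell_g(\gamma)\Z$ ensures that $f(x) \neq x$. Conditions \ref{item:2} and \ref{item:3} are open conditions on the $1$-jet of $F$ together with the Anosov splitting; they are satisfied for $R_\theta$ (the case $f=\mathrm{id}$) by the preceding corollary, and they persist because $\dd F$ depends continuously on $\dd f$, which we have arranged to be arbitrarily close to the identity.

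The main obstacle is to make this continuity argument \emph{uniform} across the collar $A$, whose geometry degenerates as $\ell_g(\gamma) \to 0$: the distribution $\dd F(E_s \oplus E_0)$ must not coincide with $E_u \oplus E_0$ anywhere in $\pi^{-1}(A)$. This is verified by a direct computation in the hyperbolic collar coordinates, exploiting the explicit form of the stable and unstable horocycles, which shows that $E_s$ and $E_u$ remain uniformly transverse to the fiber direction (and to the shear axis $\partial_s$) independently of $\ell_g(\gamma)$. With this quantitative control in place, the openness argument applies uniformly and the triple $(g, f, F)$ meets all the requirements.
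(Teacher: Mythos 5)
Your overall strategy is the same as the paper's (pinch $\gamma$, spread the twist over the collar so that the shear in the derivative is small, compose with a small fiber rotation, and invoke openness of \ref{item:3}), but as written there are two genuine gaps.

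First, the verification of \ref{item:3} is asserted rather than performed, and the quantity you propose to control is not the right one. Showing that $E_s$ and $E_u$ stay uniformly transverse to the fiber direction says nothing yet about whether $\dd F(E_s\oplus E_0)$ can coincide with $E_u\oplus E_0$; what must be checked is that the matrix of $\dd F$ in the frame $(X,H,V)$ stays uniformly close to that of $\dd R_\theta$. This is delicate for two reasons you gloss over: (a) closeness of $\dd f$ to the identity must be measured in an orthonormal frame, and $|\partial_s|_g=\cosh r$ blows up like $1/\ell_g(\gamma)$ at the collar ends, so your coordinate bound $\phi'=O(\ell/w)$ only becomes $\phi'\cosh r=O(1/w)$ after this conversion; (b) the bundle lift $F(x,v)=R_\theta(f(x),\dd f(x)v/|\dd f(x)v|)$ involves \emph{second} derivatives of $f$ in its own differential, again weighted by the degenerating collar geometry. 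You have arranged bounds on $\phi'$ and $\phi''$, so the estimate is plausibly true, but the whole content of the paper's proof at this point is the explicit computation of $\dd F_0$ in the frame $(X,H,V)$, showing that the relevant entries $\varepsilon\to 0$ and $\mu,\nu\to 1$ as $\ell\to 0$; without that computation (or its analogue for your wide-collar normalization) the proof is not complete.

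Second, and more seriously, condition \ref{item:2} is not ``an open condition on the $1$-jet of $F$'': it is transversality of $\R^+\times\mathrm{Q}$ to the non-compact, non-closed submanifold $G_\varphi$, which involves $\dd\varphi_t$ for all $t\geqslant 0$, and transversality to a non-closed set does not persist under small perturbations. (Only the combination of \ref{item:2} and \ref{item:3}, i.e.\ $N^*\mathrm{Q}\cap\overline{\Omega_+}'=\{0\}$, is open — and even then you would need to establish the base case for $R_\theta$ at every intersection time, which you do not do; the preceding corollary does not supply it.) The paper instead runs a parametric transversality (Sard-type) argument on the one-parameter family $\theta\mapsto R_\theta\circ F_1$: using the absence of conjugate points to show that $\dd\varphi_t(V)$ has a nonzero $H$-component for $t>0$, one checks that the family map $\mathscr F$ is transverse to each $G_\tau$, so that for almost every $\theta$ conditions \ref{item:1} and \ref{item:2} hold on $G_\tau$; taking $\tau=n$ and intersecting countably many full-measure sets handles all times. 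This step, together with its geometric input, is missing from your argument and cannot be replaced by an openness claim.
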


If $\gamma_1,\dots,\gamma_{3\mathrm{g}-3}$ are pairwise disjoint simple closed geodesics ($\mathrm{g}$ the genus of $\Sigma$), we can make the same construction near each such geodesic (i.e pinch them and make a Dehn twist, or a power of a Dehn twist). In this way we generate a maximal rank free abelian subgroup of the mapping class group of $\Sigma$.

\begin{proof}
Since the genus of $\Sigma$ is $\mathrm{g}\geqslant 2$, we can find a constant curvature $-1$ metric $g_\ell$ on $\Sigma$, such that the closed geodesic $\gamma\in h$ has length $\ell>0$. By the Collar Lemma \cite[Theorem 4.1.1]{Buser}, provided $\ell>0$ is small enough, $\gamma$ has a neighbourhood of the form $(\R/\ell \Z)_\tau \times [-2, 2]_\rho$ where the metric writes
\[
\dd s^2 = h(\rho)^2\dd \tau^2 + \dd \rho^2,
\]
where 
\[
h(\rho) = \cosh \rho.
\]
Let $H = (R_{-\pi / 2})_* X$ and $V$ be the generator of the flow $(R_\theta)_{\theta \in \R}$.
In the coordinates $(\partial_\tau, \partial_\rho, \partial_\theta)$, we have $V = \partial_\theta$,
\[
X = 
\begin{pmatrix} h^{-1}\cos \theta \\ \sin \theta \\ 4 \cos \theta \, \tanh \rho \end{pmatrix} \quad \text{ and } \quad
H = 
\begin{pmatrix} -h^{-1}\sin \theta \\ \cos \theta \\ -4 \sin \theta \, \tanh \rho \end{pmatrix}.
\]
Moreover, because $\kappa = -1$, it is a classical result that
\begin{equation}\label{eq:stablehyperbolic}
E_s = \R(H - V) \quad \text{and} \quad E_u = \R(H + V).
\end{equation}
On the other hand, let $\chi \in C^\infty([-2, 2], [0, 1])$ be a monotone function such that $\chi \equiv 0$ on $[-2, -1]$ and $\chi \equiv 1$ on $[1, 2]$, and put
\[
f_0(\tau, \rho) = (\tau + \ell \chi(\rho), \rho).
\]
Now set 
\[
F_0(\tau, \rho, v) = \left(f_0(\tau, \rho), \frac{\dd f_0(\tau, \rho) v}{\|\dd f_0(\tau, \rho) v\|}\right).
\]
Then it is not hard to see that, in the coordinates $(\tau, \rho, \theta)$, we have
\[
F_0(\tau, \rho, \theta) = \left(\tau + \ell \chi(\rho), \rho, \varphi \right)
\]
where $\varphi \in \R/2\pi \Z$ is defined by 
\begin{equation}\label{eq:phitheta}
\tan \varphi = \frac{\sin \theta}{\cos \theta + \ell \chi'(\rho)h(\rho)\sin \theta}
\end{equation}
if $\chi'(\rho) \neq 0$ and $\varphi = \theta$ otherwise.
We may thus write
\begin{equation}\label{eq:df}
\dd F_0(\tau, \rho, \theta) = 
\begin{pmatrix}
1 & \ell \chi'(\rho) & 0 \\ 0 & 1 & 0 \\ 0 & \partial_\rho \varphi & \partial_\theta \varphi
\end{pmatrix}.
\end{equation}
For $z = (\tau, \rho, \theta)$ let $\beta_z = (\partial_\tau, \partial_\rho, \partial_\theta)$, and $\tilde \beta_z = (X(z), H(z), K(z))$. Then the matrix of $\tilde \beta_z$ in $\beta_z$ is given by
\[
[\beta_z : \tilde \beta_z] =
\begin{pmatrix}
h^{-1} \cos \theta & -h^{-1} \sin \theta & 0 \\ \sin \theta & \cos \theta & 0 \\ 2 \cos \theta \, \tanh \rho & -2 \sin \theta \, \tanh \rho  & 1
\end{pmatrix},
\]
and its inverse $[\tilde \beta_{F_0(z)} : \beta_{F_0(z)}] $ at $F_0(z) = \left(\tau + \ell \chi(\rho), \rho, \varphi \right)$ reads
$$
\begin{pmatrix}
h \cos \varphi & \sin \varphi & 0 \\ -h \sin \varphi & \cos \varphi & 0 \\ -2 \partial_\rho h & 0 & 1
\end{pmatrix}.
$$
Using \eqref{eq:df} we can compute the matrix of $\dd F_0(z)$ computed in the basis $\tilde \beta_z$ and $\tilde \beta_{F_0(z)}$; it is of the form
$$
\begin{pmatrix}
\lambda & \mu^\perp & 0 \\
0 & \mu & 0 \\
\delta & \varepsilon & \nu
\end{pmatrix}
$$
for some functions $\lambda, \mu^\perp, \mu, \delta, \varepsilon, \nu$ of $(\tau, \rho, \theta)$, with $\nu = \partial_\theta \varphi$ and
$$
\begin{aligned}
\mu &= \sin \theta \sin \varphi + \cos \theta(-\ell h \chi' \sin \varphi + \cos \varphi), \\
\varepsilon &= -2 \partial_\rho h(-h^{-1} \sin \theta + \ell \chi' \cos \theta) - 2 h^{-1} \partial_\rho h \sin \theta \partial_\theta \varphi + \partial_\rho \varphi \cos \theta.
\end{aligned}
$$
Now, write
$$
\varepsilon = 2 h^{-1} \partial_h \sin \theta (1 - \partial_\theta \varphi) + \partial_\rho \varphi \cos \theta - 2 \ell \chi' \partial_\rho h \cos \theta.
$$
Equation \eqref{eq:phitheta} yields
\[
\partial_\theta \varphi \to 1, \quad \partial_\rho \varphi \to 0 \quad \text{and} \quad \varphi \to \theta
\]
uniformly on $[-2, 2] \times \R/\ell \Z$ as $\ell \to 0$. Therefore we get
\[
\varepsilon \to 0 \quad \text{and} \quad \mu, \nu \to 1
\]
as $\ell \to 0.$ Consequently, whenever $\ell$ is small enough, we have 
\[
\dd F_0(E_u \oplus E_0) \neq E_s \oplus E_0
\]
by \eqref{eq:stablehyperbolic}, which is equivalent to the condition \emph{\ref{item:3}}. Since \emph{\ref{item:3}} is an open condition, any sufficiently small perturbation of $F_0$ will still satisfy it. 

We can find $f$ arbitrarily close to $f_0$, with non degenerate fixed points $x_1, \dots, x_q$. Let $F_1$ be a bundle map over $f$, close to $F_0$, so that $F_1$ satisfies \emph{\ref{item:3}}. 

Next, consider the function
\[
\mathscr F : \R/2\pi\Z \times M \to M \times M
\]
defined by
\[
\mathscr F(\theta, z) = (R_\theta  F_1(z), z),
\]
and set $F_\theta(z) = R_\theta \circ F_1$. We claim that for a dense set of $\theta$'s, $F_\theta$ satisfies \emph{\ref{item:1}} and \emph{\ref{item:2}}. For this we will use the tranversality result \cite[Th\'eor\`eme p. 93]{laudenbach2012transversalite}.

For point \emph{\ref{item:1}}, observe that $F_\theta$ has non degenerate fixed points if and only if $F_\theta$ has non-degenerate fixed points in each fiber $S_{x_j} \Sigma$, $j=1\dots q$. However, 
\[
\mathrm{ran} \left(\dd\mathscr{F}_{|\R\times S_{x_j}\Sigma}\right) = T [S_{x_j}\Sigma] ^2
\]
is tranverse to the diagonal, so that according to \cite[Th\'eor\`eme p. 93]{laudenbach2012transversalite}, for almost every $\theta$, $F_\theta$ has non degenerate fixed points in $S_{x_j} \Sigma$. 

Let us turn to point \emph{\ref{item:2}}. Define for $\tau >0$ 
\[
G_\tau = \{(\varphi_t(z), z)\ |\ 0 \leqslant t < \tau,~z \in M\} \subset M \times M.
\]
Let us prove that $\mathscr{F}$ is transverse to $G_\tau$. Let $z \in M$, $\theta \in \R/2\pi \Z$ and $\tau > t\geqslant 0$ such that we have $R_\theta F_1(x,v) = \varphi_t(z)$. For simplicity we set $z_t = \varphi_t(z)$. If $t=0$, the tranversality of $\mathscr{F}$ with $G_\varphi$ follows from the tranversality with the diagonal, so we assume that $t>0$. At our point of intersection, $\mathrm{ran} (\dd\mathscr{F})$ contains 
\[
T_{z_t} S_{f(x)} \Sigma \times T_{z} S_x \Sigma = \mathrm{vect} \{ (V,0),\ (0,V) \} 
\]
On the other and, $T G_\tau$ is generated by $(X, 0)$ and the vectors
\[
(\dd \varphi_t(z)W, W), \quad W \in T_{z} M.
\]
In particular, $T G_\tau$ contains also $(0,X)$. Finally, since $\Sigma$ has no conjugate points, $\dd\varphi_t(V)$ always has a non-zero component along $H$ when $t>0$, so that $\mathrm{ran} (\dd\mathscr{F}) + T G_\tau$ contains 
\[
(X,0),\ (0,X),\ (V,0),\ (0,V),\ (H,0),\ (0,H),
\]
which generate $T_{(z_t, z)}(M \times M)$. Taking a sequence $\tau=n$, $n\in \mathbb{N}$, we can use again \cite[Th\'eor\`eme p. 93]{laudenbach2012transversalite} to conclude. This closes the proof of Proposition \ref{prop:transverse}.
\end{proof}

\bibliographystyle{alpha}
\bibliography{biblio}

\end{document}